\title{Grouped Transformations and Regularization in High-Dimensional Explainable ANOVA Approximation}
\author{ 	Felix Bartel \\
	Faculty of Mathematics\\
	Chemnitz University of Technology\\
	09107 Chemnitz \\
	\texttt{felix.bartel@math.tu-chemnitz.de} \\ 
	\And
	\href{https://orcid.org/0000-0003-3651-4364}{\includegraphics[scale=0.06]{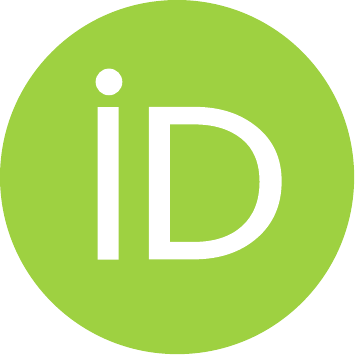}\hspace{1mm}Daniel Potts} \\
	Faculty of Mathematics\\
	Chemnitz University of Technology\\
	09107 Chemnitz \\
	\texttt{potts@math.tu-chemnitz.de} \\
	\And
	\href{https://orcid.org/0000-0003-1152-0864}{\includegraphics[scale=0.06]{orcid.pdf}\hspace{1mm}Michael Schmischke} \\
	Faculty of Mathematics\\
	Chemnitz University of Technology\\
	09107 Chemnitz \\
	\texttt{michael.schmischke@math.tu-chemnitz.de} \\
}
\pgfplotsset{compat=1.13}
\DeclareMathOperator{\diag}{diag}
\DeclareMathOperator*{\supp}{supp}
\newcommand{\fun}[3]{#1 \colon #2 \rightarrow #3}
\newcommand{\abs}[1]{\left|#1\right|}
\newcommand{\norm}[2]{\left\| #1 \right\|_{#2}}
\newcommand{\herm}{^\mathsf H}
\renewcommand{\subset}{\subseteq}
\renewcommand{\epsilon}{\varepsilon}
\renewcommand{\b}{\bm}
\renewcommand{\cref}{\Cref}
\newcommand{\eps}{\mathrm\varepsilon}
\newcommand{\e}{\mathrm e}
\renewcommand{\i}{\mathrm i}
\newcommand{\C}{\mathbb C}
\newcommand{\N}{\mathbb N}
\newcommand{\R}{\mathbb R}
\newcommand{\Z}{\mathbb Z}
\newcommand{\T}{\mathbb T}
\newcommand{\x}{\b x}
\newcommand{\y}{\b y}
\renewcommand{\k}{\b k}
\newcommand{\F}{\b F}
\renewcommand{\L}{\mathrm{L}}
\newcommand{\sobolev}[1]{\mathrm{H}^{#1}}
\newcommand{\D}{\mathcal D}
\renewcommand{\u}{\b u}
\newcommand{\uc}{\b{u}^{\mathrm c}}
\newcommand{\au}{\abs{\u}}
\newcommand{\auc}{d-\abs{\u}}
\renewcommand{\v}{\bm v}
\newcommand{\va}[1]{\sigma^2(#1)} 
\newcommand{\gsi}[2]{\varrho(#1,#2)}
\newcommand{\Fseries}[4]{\sum_{#1 \in #2} #3 \, \e^{2\pi\i #1 \cdot #4}}
\newcommand{\fc}[2]{\mathrm{c}_{#1}\!\left(#2\right)}
\newcommand{\drawfrequencyset}[1]{
	\begin{tikzpicture}[scale = 0.25]
		\CatchFileDef\loadeddata{#1}{\endlinechar=-1}
		\foreach \x\y\z\color in \loadeddata{
			\draw[fill=black!25!\color] (\x-0.5,\y-0.5,\z+0.5) -- (\x+0.5,\y-0.5,\z+0.5) -- (\x+0.5,\y+0.5,\z+0.5) -- (\x-0.5,\y+0.5,\z+0.5) -- cycle;
			\draw[fill=black!50!\color] (\x+0.5,\y-0.5,\z+0.5) -- (\x+0.5,\y-0.5,\z-0.5) -- (\x+0.5,\y+0.5,\z-0.5) -- (\x+0.5,\y+0.5,\z+0.5) -- cycle;
			\draw[fill=\color] (\x+0.5,\y+0.5,\z+0.5) -- (\x+0.5,\y+0.5,\z-0.5) -- (\x-0.5,\y+0.5,\z-0.5) -- (\x-0.5,\y+0.5,\z+0.5) -- cycle;
		}
	\end{tikzpicture}
}
\newtheorem{theorem}{Theorem}[section]
\newtheorem{lemma}[theorem]{Lemma}
\newtheorem{remark}[theorem]{Remark}
\newtheorem{definition}[theorem]{Definition}
\newtheorem{example}[theorem]{Example}
\newtheorem{corollary}[theorem]{Corollary}
\begin{document}
\maketitle

\begin{abstract}
	In this paper we propose a tool for high-dimensional approximation based on trigonometric polynomials where we allow only low-dimensional interactions of variables. In a general high-dimensional setting, it is already possible to deal with special sampling sets such as sparse grids or rank-1 lattices. This requires black-box access to the function, i.e., the ability to evaluate it at any point. Here, we focus on scattered data points and grouped frequency index sets along the dimensions. From there we propose a fast matrix-vector multiplication, the grouped Fourier transform, for high-dimensional grouped index sets. Those transformations can be used in the application of the previously introduced method of approximating functions with low superposition dimension based on the analysis of variance (ANOVA) decomposition where there is a one-to-one correspondence from the ANOVA terms to our proposed groups. The method is able to dynamically detected important sets of ANOVA terms in the approximation. In this paper, we consider the involved least-squares problem and add different forms of regularization: Classical Tikhonov-regularization, namely, regularized least squares and the technique of group lasso, which promotes sparsity in the groups. As for the latter, there are no explicit solution formulas which is why we applied the fast iterative shrinking-thresholding algorithm to obtain the minimizer. Moreover, we discuss the possibility of incorporating smoothness information into the least-squares problem. Numerical experiments in under-, overdetermined, and noisy settings indicate the applicability of our algorithms. While we consider periodic functions, the idea can be directly generalized to non-periodic functions as well.
\end{abstract}

\keywords{analysis of variance \and ANOVA \and explainable approximation \and fast iterative shrinkage-thresholding algorithm \and FISTA \and group lasso \and high dimensional approximation \and multivariate trigonometric polynomials \and nonequispaced fast Fourier transform \and NFFT \and LSQR}

\section{Introduction}

Discrete transformations like the discrete Fourier transform, the discrete cosine transform, and many others play an important role in a large variety of applications in applied mathematics and other sciences.
They have since given rise to algorithms like the fast Fourier transform (FFT) and the fast cosine transform that allow for the fast multiplication of the associated matrices, cf.~\cite{PlPoStTa18}.
The FFT with its many applications belongs to the most important algorithms of our time.
Those concepts have been generalized to allow for nodes to be nonequispaced resulting in the nonequispaced fast Fourier transform (NFFT) and the nonequispaced fast cosine transform, cf.~\cite{KeKuPo09, PlPoStTa18, nfft3}.
While those algorithms have a good complexity, it depends on the size of the frequency index set which grows exponentially in the dimension for many common examples.

For high dimensional approximation with trigonometric polynomials there already exist efficient methods for special sampling sets, such as sparse grids \cite{He01, JiXu10, GriHa13} or rank-1 lattice nodes \cite{Kae2012,KaPoVo13}.
In \cite{PoSc19a} the multivariate analysis of variance (ANOVA) decomposition, cf.~\cite{CaMoOw97, RaAl99, LiOw06, KuSlWaWo09, Holtz11, Gu2013}, was related to a decomposition in the frequency domain that allowed for the Fourier coefficients to be considered in subsets of coordinate indices related to the ANOVA terms which we propose as groups.
If one assumes sparsity in the index sets related to sparsity in the ANOVA decomposition, it is possible to decompose high-dimensional nonequispaced discrete transforms into multiple transforms associated with ANOVA terms.
In this paper, we introduce the grouped Fourier transformation which is able to handle high-dimensional frequency index sets if they have sparsity in their support, i.e., the number of nonzero elements in the frequencies is limited by a so called superposition threshold, cf.~\cite{PoSc19a}.

In \cite{PoSc19a} a method was proposed for the approximation of periodic functions with a low superposition dimension, see e.g.~\cite{CaMoOw97, KuSlWaWo09, DePeVo10, Ow19, HaSchaeShiToTrWa21}, and a sparse ANOVA decomposition. In other words, we assume that the variables of a function only interact in low-dimensional terms. From a theoretical viewpoint, it has been proven in \cite[Section 4]{PoSc19a} that periodic functions of specific dominating-mixed smoothness with product and order-dependent weights yield low superposition dimensions and can therefore be approximated well by the method. In applications, one may assume that we have sparsity-of-effects or that the Pareto principle holds, see e.g.~\cite{Wu2011,HaSchaeShiToTrWa21}, meaning that many real world systems are dominated by only a small number of low complexity interactions. The frequency index sets used in the method have a grouped structure and we have a one-to-one relation to the truncated ANOVA decomposition. The central part of the approximation approach is given by a least-squares problem with a Fourier matrix of a grouped index set which can be handled by the proposed grouped Fourier transform. Moreover, sensitivity analysis, cf.~\cite{OkHa04}, is used in order to rank the influence of the dimension interactions or ANOVA terms and dynamically detect important ones. This can be displayed or interpreted as an explainable ANOVA network and may lead to a further truncation of the ANOVA decomposition to an \emph{active set} of terms. One main advantage of the Fourier approach is that we are able to immediately compute an approximation to the variance of the ANOVA terms. The idea is in principle related to the multivariate decomposition method, cf.~\cite{KuNuPlSlWa17, GiKuNuWa18}, for the approximation of high- to infinite-dimensional integrals as well as functions, see \cite{WaWo10}. However, we are considering the approximation of functions and work with random or scattered data instead of quasi-Monte Carlo methods. Moreover, our detection of the active set is dynamic and comes from sensitivity analysis instead of a-priori information.

In this paper, we use the grouped Fourier transform to solve the appearing least-squares problem and discuss enhancements by combing it with regularization approaches: The first method uses least-squares minimization with the LSQR algorithm, see~\cite{PaSa82}, as in \cite{PoSc19a}. However, we add a Tikhonov regularization term and also discuss the incorporation of smoothness where we use Sobolev weights to adapt our algorithm for different decay properties of the Fourier coefficients. Here, the sensitivity analysis on the approximation lets us identify important terms which lead to an active set. Solving the least-squares system with this active set yields the benefit of a simpler model function and improves approximation quality, cf.\ results in \cite{PoSc19a}.

In the second approach we try to combine both steps, i.e., approximation and identification of an active set of terms by using the group lasso approach from \cite{YL06} which immediately promotes sparsity in the groups. This approach has been used and adapted to several scenarios, see e.g.\ \cite{MeGeBu2008, YangXKL10, SiFrHaTi2013}. Since we set the groups as the ANOVA terms, it refers to sparsity in the ANOVA decomposition, i.e., the identification of an active set of terms. This leads to a non-linear minimization problem, which we solve by using the fast iterative shrinking-thresholding Algorithm (FISTA), cf.~\cite{BT09}.
To our knowledge this regularization was only done for $\ell_2$-norms of groups, but here we were able to incorporate smoothness information by using Sobolev-type norms.

The paper is organized as follows: In \cref{sec:anova} we reiterate on the ANOVA decomposition for periodic functions as discussed in \cite{PoSc19a} and introduce grouped index sets. In \cref{sec:gft} we introduce the grouped Fourier transform for frequency sets of a grouped structure and propose a fast algorithm to compute it. Furthermore, we discuss complexity as well as the error of this algorithm. We apply the grouped Fourier transform in \cref{sec:approx} to the least-squares problem \eqref{min_prob} of approximating high-dimensional functions with the method proposed in \cite{PoSc19a}. In the \cref{sec:lsqr} we add a $\ell_2$ regularization term and therein use information about the smoothness of the function. The solver in this case is the well-known LSQR method, cf.~\cite{PaSa82}. \cref{sec:fista} contains the group lasso regularization idea using FISTA, see \cref{algo:fista}, as a solver. Here, the idea to use smoothness information is considered as well. We provide numerical examples with a special test function for both methods. In addition, we show a possible extension to non-periodic functions with an example from applications in \cref{sec:adult}.

\section{Classical ANOVA Decomposition}
\label{sec:anova}

In this section we discuss the main properties of the \emph{classical ANOVA decomposition}, see \cite{CaMoOw97, RaAl99, LiOw06, KuSlWaWo09, Holtz11, Gu2013}, for periodic functions $\fun{f}{\T^d}{\C}$ defined over the $d$-dimensional torus $\T \coloneqq \faktor{\R}{\Z}$. The decomposition has proven useful in understanding the reason behind the success of certain quadrature methods for high-dimensional integration \cite{Ni92, BuGr04,GrHo10} and also infinite-dimensional integration \cite{BaGn14, GrKuSl16, KuNuPlSlWa17}. For a general description of multivariate decompositions, we refer to \cite{KuSlWaWo09} and for the classical ANOVA decomposition of periodic functions to \cite{PoSc19a}.

For a function $f \in \L_2(\T^d)$ with spatial dimension $d \in \N$, we introduce the integral projections \begin{equation}\label{scrubs}
	\mathrm{P}_{\u} f(\x) = \int_{\T^{\auc}} f(\x) \mathrm{d} \x_{\uc}
\end{equation} for a subset of coordinate indices $\u \subset \D \coloneqq\{1,2,\dots,d\}$ and its complement $\uc = \D \setminus \u$. Additionally, for vectors $\x \in \C^d$ indexed with a subset $\u \subset \D$ we define $\x_{\u} \coloneqq (x_i)_{i \in \u}$. This leads to the ANOVA terms \begin{equation}\label{eq:anova:term_expansion}
	f_{\u}(\x) \coloneqq \mathrm{P}_{\u} f(\x) - \sum_{\v\subsetneq\u} f_{\v}(\x) = \sum_{\substack{\k \in \Z^d \\ \supp\k = \u}} \fc{\k}{f} \e^{2\pi\i\langle\k,\x\rangle}, \u \subset \D,
\end{equation} with $\supp \k = \{ j \in \D \colon k_j \neq 0 \}$, and Fourier coefficients \begin{equation*}
	\fc{\k}{f} = \int_{\mathbb T^d} f(\x) \,\e^{- 2\pi\i\langle\k,\x\rangle} \,\mathrm{d} \x.
\end{equation*} Then the classical ANOVA decomposition of $f$ is given by \begin{equation*}
	f = \sum_{\u \subset \D} f_{\u}.
\end{equation*} The uniqueness of the decomposition as well as the relation to the series expansion \eqref{eq:anova:term_expansion} have been proven in \cite{PoSc19a}. Note that our specific choice of the integral projections \eqref{scrubs} lead to the classical ANOVA decomposition. A different choice of projection is also possible, e.g., the anchored variant used in the multivariate decomposition method, see \cite{KuNuPlSlWa17, GiKuNuWa18}.

In order to measure the importance of an ANOVA term $f_{\u}$ in relation to the function $f$, we use \emph{global sensitivity indices} or \emph{Sobol indices}
\begin{equation}\label{eq:gsi}
	\gsi{\u}{f} \coloneqq \frac{\va{f_{\u}}}{\va{f}}
	\quad\text{with}\quad
	\va{f} \coloneqq \norm{f}{\L_2(\T^d)}^2 - \abs{\fc{\b 0}{f}}^2,
\end{equation}
cf.~\cite{So90, So01, LiOw06}. This motivates the concept of effective dimensions. The superposition dimension is a particular notion of effective dimension and is given as \begin{equation}\label{superpos}
	d^{(\mathrm{sp})} = \min\left\{s \in \D\colon \sum_{\substack{\u \subset \D \\ \au \leq s}}\va{f_{\u}} \geq \alpha \va{f} \right\}
\end{equation} for an accuracy $\alpha \in [0,1]$. In other words, it is the smallest integer $s \in \D$ such that the variance of the function can mostly be explained by up to $s$-dimensional ANOVA terms. 

The number of ANOVA terms is $2^d$ and therefore grows exponentially in the dimension which reflects the well-known curse of dimensionality. In order to circumvent the curse, we use the idea of truncating the ANOVA decomposition and only taking a certain number of terms into account. A subset $U$ of the power set $\mathcal{P}(\D)$ is called \emph{subset of ANOVA terms} if for every $\u \in U$, all subsets $\v\subset\u$ are also elements of $U$. A special truncation possibility is motivated by the superposition dimension $d^{(\mathrm{sp})} \in \D$. We may form a set of terms \begin{equation}\label{eq:pizmosch}
	U_{d_s} \coloneqq \left\{ \u \subset \D \colon \au \leq d_s \right\}
\end{equation} where the order is limited by a superposition threshold $d_s \in \D$, see e.g.\ \cite{Holtz11,PoSc19a}. The condition that a subset of ANOVA terms $U \subset \mathcal{P}(\D)$ has to be downward closed can be relaxed of one assumes $f_{\u} \equiv 0$ for $\u \notin U$. 

For a subset of ANOVA terms $U \subset \mathcal{P}(\D)$, we define the truncation \begin{equation}\label{universe}
	\mathrm{T}_{U} f \coloneqq \sum_{\u\in U} f_{\u}.
\end{equation} and the special case $\mathrm{T}_{d_s} f \coloneqq \mathrm{T}_{U_{d_s}} f$. In \cite{PoSc19a} it has been shown that functions of certain types of smoothness, e.g.\ dominating-mixed smoothness, have a low superposition dimension and can therefore be approximated well by a truncation with $\mathrm{T}_{d_s} f$, see also \cite{Ow19}.

To obtain finite dimensions, we truncate the series expansion \eqref{eq:anova:term_expansion} to finite frequency sets which we introduce in the following. We start with the one-dimensional frequency set $ \mathcal I_N \coloneqq \mathbb Z\ \cap [-N/2,N/2)$, $N \in 2\mathbb N$, with $2\mathbb N$ being the even natural numbers. For $N \in 2\mathbb N$ and $\u \subset \D$ we define \begin{equation}\label{metallica}
	\mathcal I_{N}^{\bm u, d}
	\coloneqq \left\{
	\bm k\in \Z^d  \colon \supp \k = \u \text{ and } k_j \in \mathcal{I}_{N} \text{ for }  j \in \u
	\right\}.
\end{equation}
\begin{remark}\label{rem:cardinality}
	For a bandwidth $N \in 2\mathbb N$ we have $\{ {\b\ell}_{\u} \colon \b\ell \in \mathcal I_{N}^{\bm u, d} \} = (\mathcal I_{N}\setminus\{0\})^{\au}$ and, thus, we obtain for the cardinality
	$| \mathcal I_{N}^{\bm u, d} | = (N-1)^{\au}$.
\end{remark}

For a given subset of ANOVA terms $U \subset \mathcal{P}(\D)$ and bandwidths $N_{\u} \in 2\mathbb{N}$, $\u \in U$, we define a \emph{grouped index set} as the disjoint union \begin{equation}\label{groupedset}
	\mathcal{I}_{\b N}(U) \coloneqq \bigcup_{\bm u\in U} \mathcal I_{N_{\u}}^{\bm u, d}
\end{equation} with $\b N \coloneqq (N_{\u})_{\u \in U}$.
The corresponding Fourier partial sum is then given by
\begin{equation}\label{eq:rollingstones}
	S_{\mathcal{I}_{\b N}(U)} f  (\x) \coloneqq \sum_{\bm k\in\mathcal{I}_{\b N}(U)} c_{\bm k}(f) \,\mathrm e^{2\pi\mathrm i\langle\bm k,\bm x\rangle}.
\end{equation}
\begin{remark}
	Setting $N_{\u} = N \in 2\mathbb N$ for a bandwidth $N \in 2\N$ and $\u \subset \D$, we have a disjoint union of the hypercube \begin{equation}\label{hypercube}
		\mathcal{I}_{N}^d = \mathcal{I}_{(N, N, \dots, N)}(\mathcal{P}(\D)) =  \bigcup_{\bm u\subset\D} \mathcal I_{N}^{\bm u, d}.
	\end{equation}
	This identity is visualized in \cref{fig:dropkickmurphys}.
\end{remark}
\begin{figure}
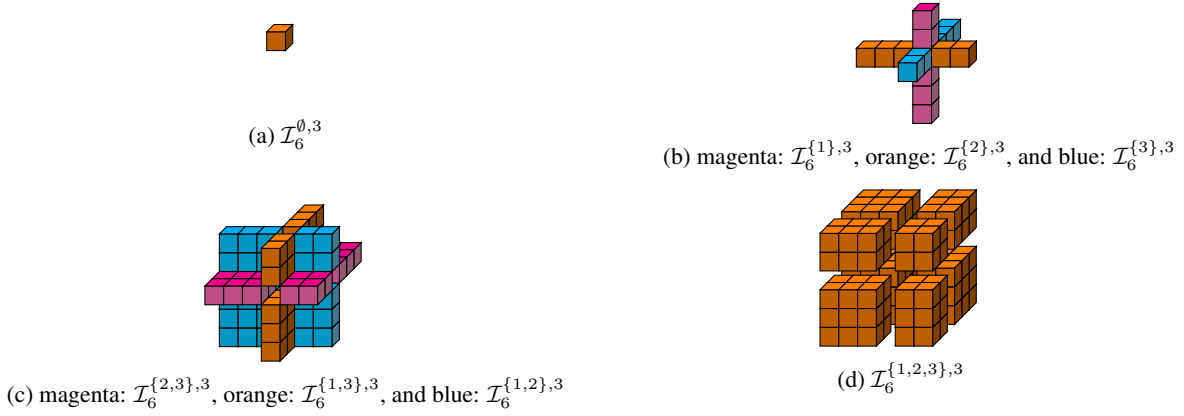

  \begin{subfigure}{0.49\textwidth}
    \centering
    \drawfrequencyset{data/frequencyset0.dat}
    \vspace{0.7cm}
    \caption{
      $\mathcal I_6^{\emptyset, 3}$
    }
  \end{subfigure}
  \begin{subfigure}{0.49\textwidth}
    \centering
    \drawfrequencyset{data/frequencyset1.dat}
    \caption{
      magenta: $\mathcal I_{6}^{\{1\}, 3}$,
      orange: $\mathcal I_{6}^{\{2\}, 3}$,
      and blue: $\mathcal I_{6}^{\{3\}, 3}$
    }
  \end{subfigure}
  \begin{subfigure}{0.49\textwidth}
    \centering
    \vspace{0.4cm}
    \drawfrequencyset{data/frequencyset2.dat}
    \caption{
      magenta: $\mathcal I_{6}^{\{2, 3\}, 3}$,
      orange: $\mathcal I_{6}^{\{1, 3\}, 3}$,
      and blue: $\mathcal I_{6}^{\{1, 2\}, 3}$
    }
  \end{subfigure}
  \begin{subfigure}{0.49\textwidth}
    \centering
    \drawfrequencyset{data/frequencyset3.dat}
    \caption{
      $\mathcal I_{6}^{\{1, 2, 3\}, 3}$
    }
  \end{subfigure}
  \caption{Decomposition of $\mathcal{I}_{6}^3$ into its groups.}
  \label{fig:dropkickmurphys}
\end{figure}

\section{Grouped Fourier Transform}\label{sec:gft}

The \emph{nonequispaced discrete Fourier transformation} in $d \in \mathbb N$ dimensions is given by
\begin{equation*} 
	f_{\bm x}  = \sum_{\bm k\in\mathcal{I}_{N}^d} \hat f_{\bm k} \, \e^{2\pi\i\langle\k,\x\rangle}
\end{equation*}
for $\bm x\in\mathcal X$, where $\mathcal X \subset\mathbb T^d$ is an arbitrary finite set of nodes on the torus $\mathbb{T}$. Moreover, we have the index set $\mathcal{I}_{N}^d \subset \mathbb Z^d$ from \eqref{hypercube}, coefficients $\hat f_{\bm k} \in \C$, $\bm k \in \mathcal{I}_{N}^d$, and $\langle\cdot,\cdot\rangle$ the Euclidean inner product, see \cite[Chapter 7]{PlPoStTa18}.
The related Fourier matrix is given by
\begin{equation}\label{eq:FM}
	\bm F\left(\mathcal X, \mathcal I\right)
	= \left(   \e^{2\pi\i\langle\k,\x\rangle} \right)_{\bm x\in\mathcal X, \bm k \in \mathcal I}
	\in\mathbb C^{|\mathcal X| \times |\mathcal I|},
\end{equation}
where a straightforward matrix-vector multiplication requires ${\cal O}(|\mathcal X|\,|\mathcal I|)$ arithmetical operations.

In the following, we introduce the \textit{grouped Fourier transform} as a special case with the frequency index sets $\mathcal{I}_{\b N}(U)$, $U \subset \mathcal{P}(\D)$, $\b N \in (2\N)^{\abs{U}}$, as in \eqref{groupedset}, i.e., they consist of a disjoint union of lower dimensional frequency index sets with zeros along some dimensions. Index sets of this type were also used in \cite{PoSc19a}.
In this section, we discuss all necessities to carry out the computations in lower dimensions instead of the high spatial dimension.
This will peak in the identity \eqref{eq:somafm}, which is then used to develop fast algorithms.

\begin{definition}\label{def:skatoons}
	Let $U$ be a subset of $\mathcal P(\D)$, where $\mathcal P$ stands for the powerset, and $\mathcal{I}(U)$ a grouped index set of structure \eqref{groupedset} with $N_{\u}\in 2\N$, $\bm u\in U$, bandwidth parameters, and $\hat f_{\bm k}\in\mathbb C$ coefficients for $\bm k\in \mathcal{I}_{\b N}(U)$.
	On an arbitrary set of nodes $\bm x\in\mathcal X$, the \emph{grouped Fourier transform} is then given by
	$$
	f_{\bm x} = \sum_{\bm k\in\mathcal{I}_{\b N}(U)} \hat f_{\bm k} \, \e^{2\pi\i\langle\k,\x\rangle}
	= \sum_{\bm u\in U}\sum_{\bm k\in \mathcal I_{N_{\u}}^{\bm u, d}} \hat f_{\bm k}  \, \e^{2\pi\i\langle\k,\x\rangle}.
	$$
\end{definition}

Computing the grouped Fourier transform from \cref{def:skatoons} via the naive approach, i.e., setting up the Fourier matrix $\bm F(\mathcal X, \mathcal{I}_{\b N}(U))$ and performing a matrix-vector multiplication with $\bm{\hat f} = (\bm{\hat f}(\bm u))_{\bm u \in U}$ and $\bm{\hat f}(\bm u) = (\hat f_{\bm k})_{\k\in \mathcal{I}_{N_{\u}}^{\bm u, d}}$ results in the complexity class
$$
\mathcal O
\left(|\mathcal X|\sum_{\bm u\in U}|\mathcal I_{N_{\u}}^{\bm u, d}|\right)
=
\mathcal O
\left(|\mathcal X|\sum_{\bm u\in U}(N_{\u}-1)^{\au}\right),
$$ cf.\ \cref{rem:cardinality}

This is infeasible in high dimensions, but can be reduced as follows.
First, we observe that we are working with a block matrix, i.e.,
\begin{equation}\label{bananarama}
	\bm F(\mathcal X, \mathcal{I}_{\b N}(U)) = \begin{pmatrix}
		\bm F(\mathcal X, \mathcal I_{N_{\u_1}}^{\u_1, d}) \quad \bm F(\mathcal X, \mathcal I_{N_{\u_2}}^{\u_2, d}) \quad  \cdots\quad \bm F(\mathcal X, \mathcal I_{N_{\u_{\abs{U}}}}^{\u_{\abs{U}}, d})
	\end{pmatrix}
\end{equation} when introducing an order on the sets $\u$.
Using the sparse structure of $\mathcal I_{N_{\u}}^{\bm u, d}$, we obtain
\begin{align*}
	\bm F\left(\mathcal X, \mathcal I_{N_{\u}}^{\bm u, d}\right)
	& = \left(  \e^{2\pi\i\langle\k,\x\rangle} \right)_{\bm x\in\mathcal X, \bm k \in \mathcal I_{N_{\u}}^{\bm u, d}} \\
	& = \left(  \e^{2\pi\i\langle\b\ell,\x_{\u}\rangle} \right)_{\bm x\in\mathcal X, \b\ell \in (\mathcal I_{N_{\u}}\setminus\{0\})^{\au}}.
\end{align*}
For $\bm{\hat f}(\bm u)$ with frequency domain 
$\mathcal I_{N_{\u}}^{\bm u, d}$ we now introduce an extension such that the coefficients corresponding to zero-frequencies are filled with zeros and the frequency-dimension is reduced to $|\bm u|$.
Thus, we have frequencies in $\mathcal I_{N_{\u}}^{|\bm u|}$.
This extension is visualized in \cref{fig:extension} and can be achieved formally by defining a vector $\bm{\hat g}(\bm u) \in \C^{|\mathcal I_{N_{\u}}^{\bm u, d}|}$ with entries \begin{equation*}
	\left(\bm{\hat g}(\bm u)\right)_{\b\ell \in \mathcal I_{N_{\u}}^{|\bm u|}} = \begin{cases}
		\hat f_{\b h} &\colon \supp\b\ell = \{1,2,\dots,\au\}\\
		0 &\colon \text{otherwise} 
	\end{cases}
\end{equation*} with $\b h \in \Z^d$ the frequency such that $\b{h}_{\u} = \b\ell$ and $\b{h}_{\uc} = \b 0$.
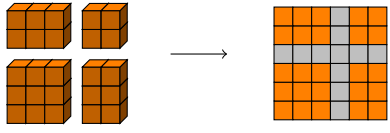
\begin{figure}
  \centering
  \begin{tikzpicture}[scale = 0.25]
    \CatchFileDef\loadeddata{data/extension.dat}{\endlinechar=-1}
    \foreach \x\y\z\color in \loadeddata{
      \draw[fill=black!25!\color] (\x-0.5,\y-0.5,\z+0.5) -- (\x+0.5,\y-0.5,\z+0.5) -- (\x+0.5,\y+0.5,\z+0.5) -- (\x-0.5,\y+0.5,\z+0.5) -- cycle;
      \draw[fill=black!50!\color] (\x+0.5,\y-0.5,\z+0.5) -- (\x+0.5,\y-0.5,\z-0.5) -- (\x+0.5,\y+0.5,\z-0.5) -- (\x+0.5,\y+0.5,\z+0.5) -- cycle;
      \draw[fill=\color] (\x+0.5,\y+0.5,\z+0.5) -- (\x+0.5,\y+0.5,\z-0.5) -- (\x-0.5,\y+0.5,\z-0.5) -- (\x-0.5,\y+0.5,\z+0.5) -- cycle;
    }
    \foreach \x\y\z\color in \loadeddata{
      \draw[fill=\color] (\x-0.5+14,\y-0.5) -- (\x+0.5+14,\y-0.5) -- (\x+0.5+14,\y+0.5) -- (\x-0.5+14,\y+0.5) -- cycle;
    }
    \foreach \x\y\z\color in {0/-3, 0/-2, 0/-1, 0/0, 0/1, 0/2, -3/0, -2/0, -1/0, 1/0, 2/0}{
      \draw[fill=black!25!white] (\x-0.5+14,\y-0.5) -- (\x+0.5+14,\y-0.5) -- (\x+0.5+14,\y+0.5) -- (\x-0.5+14,\y+0.5) -- cycle;
    }
    \draw[->] (5,0) -- (8, 0);
  \end{tikzpicture}
  \caption{Visualization of the extension for $\mathcal I_{6}^{\{2, 3\}, 3}$ to $\mathcal I_{6}^{2}$.}
  \label{fig:extension}
\end{figure}

We obtain the identity
\begin{equation}\label{eq:somafm}
	\bm F\left(\mathcal X, \mathcal I_{N_{\u}}^{\bm u, d}\right) \bm{\hat f}(\bm u)
	=
	\bm F\left(\mathcal X_{\bm u}, \mathcal I_{N_{\u}}^{|\bm u|}\right) \bm{\hat g}(\bm u),
\end{equation}
with $\mathcal X_{\bm u} \coloneqq \{ \x_{\u} \colon \x \in \mathcal X \}$ where the left-hand side operates in $d$ dimensions and the right-hand side in $|\bm u|$ dimensions. Note that $\mathcal X_{\bm u}$ is a multiset, i.e., it may contain duplicates.
Furthermore, the right-hand side of \eqref{eq:somafm} can be computed via a nonequispaced fast Fourier transform (NFFT), see \cite{KeKuPo09, PlPoStTa18, nfft3}, in the complexity $\mathcal O(N_{\u}^{|\bm u|}\log N_{\u}+|\log\epsilon|^{|\bm u|}|\mathcal X|)$ for precision $\epsilon$. In summary, we have \begin{equation*}
	\bm F(\mathcal X, \mathcal{I}_{\b N}(U))\bm{\hat f} = \sum_{\u \in U} \bm F\left(\mathcal X, \mathcal I_{N_{\u}}^{\bm u, d}\right) \bm{\hat f}(\bm u) = \sum_{\u \in U} \bm F\left(\mathcal X_{\bm u}, \mathcal I_{N_{\u}}^{|\bm u|}\right) \bm{\hat g}(\bm u)
\end{equation*} which means that we have reduced one $d$-dimensional NFFT to $\abs{U}$ low-dimensional NFFTs. While the curse of dimensionality effectively hinders fast multiplication with $\bm F(\mathcal X, \mathcal{I}_{\b N}(U))\bm{\hat f}$ in $d$ dimensions we only need up to $d_s$-dimensional NFFTs for $U \subseteq U_{d_s}$ and the number of terms in $U_{d_s}$ grows only polynomial in the spatial dimension $d$ for a fixed $d_s$, see \eqref{superpos}. Moreover, the low-dimensional transformations can be computed independent of each other which allows for parallelization. Given enough resources, it would be possible to compute multi-threaded variants of each single transformation simultaneously which yields a high benefit in execution time. It follows that the multiplication with $\bm F$ can be accomplished in
$$
\mathcal O\left(
\sum_{\bm u\in U}
\left(N_{\u}^{|\bm u|}\log N_{\u}+|\log\epsilon|^{|\bm u|}|\mathcal X|\right)
\right)
$$
by the NFFT for precision $\epsilon$.
\begin{remark}
	For an error estimate we have for the $\text{NFFT}$
	$$
	\left| \sum_{\bm k\in \mathcal I_{N_{\u}}^{\bm u, d}} \hat f_{\bm k} \mathrm e^{2\pi\mathrm i\langle\bm k,\bm x\rangle}
	- \text{NFFT}\left(\bm{\hat g}(\bm u)\right) \right|
	\le C_{\bm u}(N_{\u}) \left\|\bm{\hat f}(\bm u)\right\|_1
	$$
	where $C_{\bm u}(N_{\u})$ decays exponentialy in $N_{\u}$, cf.~\cite[Theorem 7.8]{PlPoStTa18} and references therein.
	Since the grouped Fourier transform is a sum of the above, we can bound the overall error by
	$$
	\left(\max_{\bm u\in U} C_{\bm u}(N_{\u})\right) \left\|\bm{\hat f}\right\|_1.
	$$
\end{remark}

The adjoint problem, i.e., the multiplication of $\bm F(\mathcal X, \mathcal{I}_{\b N}(U))\herm$ with a vector $\b f \in \C^{\abs{\mathcal{X}}}$ can be realized in the same fashion. We have \begin{equation*}
	\bm F(\mathcal X, \mathcal{I}_{\b N}(U))\herm \b f = \begin{pmatrix}
		\bm F\left(\mathcal X, \mathcal I_{N_{\u_1}}^{\u_1, d}\right)\herm \b f \\
		\vdots \\
		\bm F\left(\mathcal X, \mathcal I_{N_{\u_{\abs{U}}}}^{\u_{\abs{U}}, d}\right)\herm \b f 
	\end{pmatrix} = \begin{pmatrix}
		\b F\left(\mathcal X_{\u_1}, \mathcal I_{N_{\u_1}}^{|\u_1|}\right)\herm \b f \\
		\vdots \\
		\b F\left(\mathcal X_{\u_{\abs{U}}}, \mathcal I_{N_{\u_{\abs{U}}}}^{|\u_{\abs{U}}|}\right)\herm \b f
	\end{pmatrix}
\end{equation*} which yields the same benefits. We can decompose our $d$-dimensional adjoint transform in $\abs{U}$ low-dimensional transforms and are able to compute all of them simultaneously by using parallelization.  

\begin{remark}\label{rem:extension}
	The considerations in this section are not limited to the exponential functions, but rather, can be extended to other complete orthonormal systems.
	In fact, for $\exp(2\pi\mathrm i\langle \bm k, \cdot\rangle)$ and $\cos(\pi \langle\bm k,\cdot\rangle)$ the presented ideas are already implemented in the julia package \texttt{GroupedTransforms} which can be found on GitHub, see \path{github.com/NFFT/GroupedTransforms}, where the nonequispaced fast cosine transform (NFCT) is used for performance in the latter. We refer to the references \cite{PoSc19b, PoSc21}.
\end{remark}

\section{Approximation}
\label{sec:approx}

In this section, we consider the problem of approximating periodic functions $\fun{f}{\T^d}{\C}$ with high spatial dimension $d \in \N$. In particular, we have a scattered data setting where the given data about the unknown function $f$ are sampling values $\bm y = (f(\bm x)+\eta)_{\bm x\in\mathcal X}$ which may contain noise $\eta$ on a finite set of nodes $\mathcal X\subset\mathbb T^d$. We are looking for an approximation with a Fourier partial sum \eqref{eq:rollingstones} of the truncated ANOVA decomposition and want to incorporate the fast algorithms from \cref{sec:gft}. We assume that the function has a low superposition dimension \eqref{superpos}. This can occur either because it belongs to a function class that has this property, see e.g.\ \cite{PoSc19a}, or it stems from a real world application with sparsity-of-effects. 

We use the method proposed in \cite{PoSc19a} for approximation, which suggests to choose a superposition threshold $d_s \in \D$. In particular, we will use the ANOVA terms $f_{\u}$, $\u \in U_{d_s}$, from \eqref{eq:pizmosch} with grouped frequency index set $\mathcal{I}_{\b N}(U_{d_s})$, see \eqref{groupedset}, and bandwidth parameters $N_{\u} \in 2\N$.
Since the Fourier coefficients $\fc{\k}{f} \in \C$, $\k \in \mathcal{I}_{\b N}(U_{d_s})$, are not known, we are going to approximate them by coefficients $\hat{f}_{\k} \approx \fc{\k}{f}$ that we determine from the given data $\mathcal X$ and $\y$. We obtain an approximate Fourier partial sum
\begin{equation*}
	S_{\mathcal{I}_{\b N}(U_{d_s})}^{\mathcal X} f (\x) \coloneqq \Fseries{\k}{\mathcal{I}_{\b N}(U_{d_s})}{\hat{f}_{\k}}{\x}
\end{equation*} 
with $\hat{f}_{\k} \in \C$ such that
$
S_{\mathcal{I}_{\b N}(U_{d_s})}^{\mathcal X} f (\x) 
\approx
S_{\mathcal{I}_{\b N}(U_{d_s})} f (\x) 
$. 

For any subset of ANOVA terms $U \subset U_{d_s}$ (with equality possible) computing the approximate coefficients $\hat{f}_{\k}$ is done via solving the least-squares problem \begin{equation}\label{min_prob}
	\min_{\bm{\hat f}} \frac{1}{2}\norm{\y - \bm F(\mathcal X, \mathcal{I}_{\b N}(U)) \hat{\b f}}{2}^2
\end{equation} with grouped index set $\mathcal{I}_{\b N}(U)$, bandwidth parameters $N_{\u} \in 2\N$, $\u \in U$, and Fourier matrix \eqref{eq:FM}. For $\abs{\mathcal X} > \abs{\mathcal{I}_{\b N}(U)}$ and $\bm F(\mathcal X, \mathcal{I}_{\b N}(U))$ full rank, the system has a unique solution. Moreover, if $\mathcal X$ are uniformly distributed i.i.d.\ nodes and the oversampling factor is large enough, the matrix has full rank with high probability and we get an estimate for the approximation error, cf. \cite[Theorem 6.7]{PoSc19a} and \cite[Section 5.2]{KaeUlVo19}. 

In \cref{sec:lsqr} we use the iterative LSQR method to solve \eqref{min_prob}. However, we are going to extend this by adding an $\ell_2$ regularization term known from the ordinary Tikhonov regularization and simultaneously consider weights which incorporate information about the decay rate of the Fourier coefficients, i.e., the smoothness of the function. In particular, we use weights corresponding to functions from a Sobolev type space
\begin{equation*}
	\sobolev{\omega}(\T^d) \coloneqq \left\{ f \in \L_2(\T^d) \colon \norm{f}{\sobolev{\omega}(\T^d)} \coloneqq \sqrt{\sum_{\k \in \Z^d}\omega^2(\k)\abs{\fc{\k}{f}}^2} < \infty \right\}
\end{equation*}
for a weight function $\fun{\omega}{\Z^d}{[1,\infty)}$, see e.g.~\cite{KuMaUl16, PoSc19a}. We use sensitivity analysis on $S_{\mathcal{I}_{\b N}(U_{d_s})}^{\mathcal X} f (\x)$ to detect which of the ANOVA terms $f_{\bm u}$ for $\u \in U_{d_s}$ are of a high-importance to the function and determine an active set \begin{equation}\label{activeset}
	U^\ast = U_{\mathcal X,\y}^{(\b\eps)} \coloneqq \emptyset \cup \{ \u \subset \D \colon \gsi{\u}{S_{\mathcal{I}_{\b N}(U_{d_s})}^{\mathcal X} f} > \varepsilon_{\au} \}
\end{equation} with threshold vector $\b\eps \in (0,1)^{d_s}$. Solving the problem \eqref{min_prob} with $\mathcal{I}_{\b N}(U^\ast)$ and new bandwidth parameters $N_{\u} \in 2\N$, $\u \in U^\ast$, yields the approximation $S_{\mathcal{I}_{\b N}(U^\ast)}^{\mathcal X} f (\x)$. 

Our goal is to compare how $\ell_2$ regularization impacts the addition of noise which was not considered in \cite{PoSc19a} and has been formulated as an open problem. Moreover, we discuss the addition of known Sobolev smoothness information leading to a smaller \textit{search space} which can be seen by using the equivalent Ivanov formulation of the regularization functional, cf.\,\cite{ORA16}, where we minimize the data fitting term subject to a bound on the Sobolev norm. This allows for an oversampling factor $\abs{X}/\abs{\mathcal{I}_{\b N}(U)}$ close to or smaller than one. In this case, we will even be in an undetermined setting, i.e., $\abs{X} < \abs{\mathcal{I}_{\b N}(U)}$.

In \cref{sec:fista} we use a regularization technique to introduce an approach that combines the two steps from before, i.e., approximation and active set detection. The method is a variation of the group lasso technique from \cite{YL06} that promotes sparsity in defined groups. In our case, the groups will be the ANOVA terms or subsets of coordinate indices $\u \in U_{d_s}$. As in the prior case, we add Sobolev weights which will be used to incorporate smoothness information about the function. 

Both approaches rely on iterative solvers which require fast multiplication with $\bm F(\mathcal X, \mathcal{I}_{\b N}(U))$ and its adjoint which is provided by the grouped Fourier transform from \cref{sec:gft}. As a quality measure we use the approximation error
\begin{equation}\label{L2error}
	\eps(\mathcal{X},\mathcal{I}_{\b N}(U)) \coloneqq\frac{1}{\norm{f}{\L_2(\T^d)}} \norm{f-S_{\mathcal{I}_{\b N}(U)}^{\mathcal X} f }{\L_2(\T^d)}
\end{equation} which can be computed if the norm and the Fourier coefficients of the function are known for the purpose of computing the error via Parseval's identity \begin{equation*}
	\norm{f-S_{\mathcal{I}_{\b N}(U)}^{\mathcal X} f }{\L_2(\T^d)}^2 = \norm{f}{\L_2(\T^d)}^2 + \sum_{\k \in \mathcal{I}_{\b N}(U)} \abs{\hat{f}_{\k}-\fc{\k}{f}}^2 -\sum_{\k \in \mathcal{I}_{\b N}(U)} \abs{\fc{\k}{f}}^2.
\end{equation*}

For comparability reasons, we work with the same test function throughout this section
\begin{multline}\label{eq:testfun}
	f:\T^9 \to \mathbb R, \\
	\bm x \mapsto B_2(x_1) B_4(x_3) B_6(x_8)+  B_2(x_2) B_4(x_5) B_6(x_6) + B_2(x_4) B_4(x_7) B_6(x_9),
\end{multline}
where $B_2$, $B_4$ and $B_6$ are parts of univariate, shifted, scaled, and dilated B-splines of order 2, 4, and 6, respectively. Their Fourier series is given by 
$$
B_j(x) \coloneqq c_j \sum_{k \in \Z} \mathrm{sinc}^j\left(\frac{\pi\cdot k}{j}\right) \cos( \pi \cdot k ) \,\e^{2\pi\i k \cdot x}
\quad\text{for}\quad  j = 2,4,6,
$$
with $\mathrm{sinc}(x) \coloneqq \sin(x)/x$ and the three normalization constants $c_2 \coloneqq \sqrt{3/4}$, $c_4 \coloneqq \sqrt{315/604}$, $c_6 \coloneqq \sqrt{277200/655177}$ such that $\norm{B_j}{\L_2(\T)} = 1$, $j = 2,4,6$.

The test function works well for computing the error \eqref{L2error} since we have the Fourier coefficients $\fc{\k}{f}$ and the norm $\norm{f}{\L_2(\T^d)}$ explicitly given. Moreover, The function $f$ has superposition dimension $d^{(\mathrm{sp})} = 3$, see \eqref{superpos}, for arbitrary high accuracy, i.e., it can be represented by at most three-dimensional ANOVA terms with $f = \mathrm{T}_3 f$. This leads to $d_s = 3$ being the optimal choice for the superposition threshold with no error caused by the ANOVA truncation. We have the active set of terms \begin{equation*}
	\u \in U^\ast \coloneqq \mathcal{P}(\{ 1,3,8 \}) \cup \mathcal{P}(\{ 2,5,6 \}) \cup \mathcal{P}(\{ 4,7,9 \})
\end{equation*} with $f_{\u} = 0$ for $\u \notin U^\ast$.
The function also has dominating-mixed smoothness of $3/2 - \eps$ for every $\eps > 0$, i.e., $f \in \sobolev{\omega_\eps}(\T^9)$ with 
$$
\omega_\eps(\k) = \prod_{j \in \supp \k} (1+\abs{k_j})^{\frac{3}{2}-\eps},
$$
cf.~\cite{PoVo14}. 

The ANOVA terms $f_{\u}$ can be computed analytically such that we obtain \begin{align}
	f_\emptyset &= 3 \prod_{j\in\{2,4,6\}} c_j \nonumber\\
	f_{\{i\}}(x_i) &= \frac{\prod_{j\in\{2,4,6\}} c_j}{c_{\mathrm{o}(i)}} \left(B_{\mathrm{o}(i)}(x_i) - c_{\mathrm{o}(i)}\right) \label{onedimterm}
\end{align} for the constant and the one-dimensional terms with $i = 1,2,\dots,9$ and \begin{equation*}
	\mathrm{o}(i) \coloneqq \begin{cases}
		2 \quad &\colon i \in \{1,2,4\} \\
		4 &\colon i \in \{3,5,7\} \\
		6 &\colon i \in \{8,6,9\}.
	\end{cases}
\end{equation*} We find for the two-dimensional terms $f_{\u}$, $\u = \{i,j\}$, $i,j\in\{1,2,\dots,9\}$, that $f_{\{i,j\}} \equiv 0$ for $\u\notin U^\ast$ and \begin{equation}\label{twodimterm}
	f_{\{i,j\}}(x_{\{i,j\}}) = \frac{\prod_{j\in\{2,4,6\}} c_j}{c_{\mathrm{o}(i)}c_{\mathrm{o}(j)}} \left(B_{\mathrm{o}(i)}(x_i) - c_{\mathrm{o}(i)}\right) \left(B_{\mathrm{o}(j)}(x_j) - c_{\mathrm{o}(j)}\right)
\end{equation} for $\u\in U^\ast$. Finally, we get for the three-dimensional terms $f_{\u}$, $\u = \{i,j,\ell\}$, $i,j,\ell\in\{1,2,\dots,9\}$, that $f_{\u} \equiv 0$ again for $\u\notin U^\ast$ and \begin{equation}\label{threedimterm}
	f_{\{i,j,\ell\}}(x_{\{i,j,\ell\}})= \left(B_{\mathrm{o}(i)}(x_i) - c_{\mathrm{o}(i)}\right) \left(B_{\mathrm{o}(j)}(x_j) - c_{\mathrm{o}(j)}\right) \left(B_{\mathrm{o}(\ell)}(x_\ell) - c_{\mathrm{o}(\ell)}\right)
\end{equation} for $\u\in U^\ast$. However, in the approximation scenario we only have the data about the function $f$ and not $f$ itself. The norm and the exact Fourier coefficients are only used to compute the error $\eps(\mathcal{X},\mathcal{I}_{\b N}(U))$ and compare the order of the sensitivity indices.

\subsection{LSQR}\label{sec:lsqr}

In this section, we discuss the addition of an $\ell_2$ regularization the problem \eqref{min_prob} and the incorporation of a-priori information about the decay of the Fourier coefficients. We characterize the smoothness of a function by the decay of the Fourier coefficients, i.e., a function $f \in \sobolev{\omega}(\T^d)$ has the smoothness defined by the weight $\fun{\omega}{\Z^d}{[1,\infty)}$. This directly implies for the decay $\abs{\fc{\k}{f}} \in \mathrm{o}(\omega^{-1}(\k))$ since $\omega^2(\k)\abs{\fc{\k}{f}}^2 \rightarrow 0$ for $\norm{\k}{\infty} \rightarrow \infty$ by definition. In order to incorporate this information we add a term to \eqref{min_prob} and obtain the modified problem 
\begin{equation}\label{new_min_prob}
	\min_{\hat{\b f}} \frac{1}{2}\norm{\y - \F(\mathcal X, \mathcal{I}_{\b N}(U)) \hat{\b f}}{2}^2 + \lambda \norm{\hat{\b f}}{\bm{\hat W}}^2
\end{equation} with regularization parameter $\lambda > 0$, weighted norm $\|\hat{\b f}\|_{\bm{\hat W}}^2 = \hat{\b f}\herm \bm{\hat W}\hat{\b f}$ and $\bm{\hat W} = \diag(\omega(\k))_{\k\in \mathcal I}$ a diagonal matrix. This can be the identity if no smoothness information is known and $\omega(\k) \equiv 1$, i.e., all frequencies are penalized equally. Problem \eqref{new_min_prob} always has a unique solution in our setting which we aim to find by applying the iterative LSQR algorithm, see \cite{PaSa82}. 

We apply the method with our newly obtained minimization problem \eqref{new_min_prob} to the test function \eqref{eq:testfun} which we sample at $\mathcal X \subset \T^d$, $\abs{\mathcal X} = 10\,000$, uniform i.i.d.\,random nodes and use the Sobolev weights
\begin{equation}\label{sobolevweights}
	\omega_s(\k)  = \prod_{j=1}^9 (1+|k_j|)^s 
\end{equation}
for $s \geq 0$ in $\bm{\hat W}$. In the following numerical experiments we use different parameters $s$ and $N_{\u}$ as well as $\lambda \in [\e^0,\e^{10}]$. The results are visualized with the $\L_2$-error \eqref{L2error} for the approximation $S_{\mathcal{I}_{\b N}(U^\ast)}^{\mathcal X} f$ and the global sensitivity indices $\gsi{\u}{S_{\mathcal{I}_{\b N}(U_3)}^{\mathcal X} f}$, cf.\ \eqref{eq:gsi}, for each $|\bm u|=1,2,3$ separately. Note that the errors and global sensitivity indices have been averaged over 100 random draws of the node set $\mathcal X$. 

\begin{enumerate}[(i)]
	\item
	In the first setting we choose $\omega(\bm k) = 1$ for all $\bm k\in\mathcal I$, which penalizes all frequencies equally. The regularization term in \eqref{new_min_prob} then becomes a non-weighted $\ell_2$ regularization without smoothness information.
	To compensate for this lack of information, we choose small bandwidths
	\begin{equation}\label{eq:smallbandwidths}
		N_{\u}
		= \begin{cases}
			26 & \text{for all } \u \text{ with } |\bm u| = 1 \\
			6 & \text{for all } \u \text{ with }  |\bm u| = 2 \\
			4 & \text{for all } \u \text{ with }  |\bm u| = 3,
		\end{cases}
	\end{equation}
	which add up to $3\,394$ frequencies such that we are in an overdetermined setting. The results of solving the minimization is depicted in \cref{fig:lsqrs=0sigma=0.0}. We are able to clearly detect the active set $U^\ast$ and distinguish the global sensitivity indices of the contained terms from the rest. In (b) we observe that the global sensitivity indices $\rho(\bm u,S_{\mathcal{I}_{\b N}(U_3)}^{\mathcal X} f)$ are larger than zero for all nine one-dimensional ANOVA terms $f_{\bm u}$, $\u \in U^\ast$, $\au = 1,$ occuring in the test function. We see that the sensitivity indices are on three different levels. They relate to the smoothness if we consider the analytical term \eqref{onedimterm}, i.e., depending on which of the three splines is involved, the corresponding ANOVA term is more smooth if the spline is of a higher order. The nine two-dimensional terms $f_{\bm u}$, $\u \in U^\ast$, $\au = 2,$ are clearly separated from the inactive terms in $\mathcal{P}(\D)\setminus U^\ast$, see (c). The sensitivity indices here are also grouped around three levels since in \eqref{twodimterm} there is always a product of two different splines yielding $\binom{3}{2} = 3$ types of smoothness for those terms. The plot (d) shows that we can also distinguish the three active terms $f_{\bm u}$, $\u \in U^\ast$, $\au = 3$, which are of the same smoothness since they are always a product of the three involved splines, see \eqref{threedimterm}. The minimal relative $\L_2$-error is $\eps(\mathcal{X},\mathcal{I}_{\b N}(U^\ast)) \approx 9.1\cdot 10^{-2}$ when using the active set of terms $U^\ast$ after sensitivity analysis. This experiment provides a comparison for the addition of noise and smoothness information.
	\begin{figure}
  \centering
  
  \begin{subfigure}[t]{0.24\linewidth}\centering
    \raggedleft
    \begin{tikzpicture}\begin{axis}[
      scale only axis, width = 2.1cm, height = 2.1cm,
      enlarge x limits = 0,
      xmode = log,
      xlabel = $\lambda$,
      ymin = 0, ymax = 1,
    ]
      \addplot[no marks] table[x = lambda, y = L2error] {data/lsqrs=0sigma=0.0.csv};
    \end{axis}\end{tikzpicture}
    \caption{$\eps(\mathcal{X},\mathcal{I}_{\b N}(U^\ast))$}
  \end{subfigure}
  \begin{subfigure}[t]{0.24\linewidth}\centering
    \raggedleft
    \begin{tikzpicture}\begin{axis}[
      scale only axis, width = 2.1cm, height = 2.1cm,
      enlarge x limits = 0,
      xmode = log,
      xlabel = $\lambda$,
      ymin = -0.005, ymax = 0.095,
    ]
      \foreach \u in {1, 2, 3, 4, 5, 6, 7, 8, 9}{
        \addplot[no marks, color = orange] table[x = lambda, y = \u]{data/lsqrs=0sigma=0.0.csv};
      }
    \end{axis}\end{tikzpicture}
    \caption{$\rho(\bm u, S_{\mathcal{I}_{\b N}(U_3)}^{\mathcal X} f)$ with $|\bm u| = 1$}
  \end{subfigure}
  \begin{subfigure}[t]{0.24\linewidth}\centering
    \raggedleft
    \begin{tikzpicture}\begin{axis}[
      scale only axis, width = 2.1cm, height = 2.1cm,
      enlarge x limits = 0,
      xmode = log,
      xlabel = $\lambda$,
      ymin = -0.005, ymax = 0.09,
    ]
      \foreach \u in {12, 14, 15, 16, 17, 19, 23, 24, 27, 28, 29, 34, 35, 36, 37, 39, 45, 46, 48, 57, 58, 59, 67, 68, 69, 78, 89}{
        \addplot[no marks, dash pattern = on 2pt off 1pt] table[x = lambda, y = \u]{data/lsqrs=0sigma=0.0.csv};
      }
      \foreach \u in {13, 18, 25, 26, 38, 47, 49, 56, 79}{
        \addplot[no marks, orange] table[x = lambda, y = \u] {data/lsqrs=0sigma=0.0.csv};
      }
    \end{axis}\end{tikzpicture}
    \caption{$\rho(\bm u, S_{\mathcal{I}_{\b N}(U_3)}^{\mathcal X} f)$ with $|\bm u| = 2$}
  \end{subfigure}
  \begin{subfigure}[t]{0.24\linewidth}\centering
    \raggedleft
    \begin{tikzpicture}\begin{axis}[
      scale only axis, width = 2.1cm, height = 2.1cm,
      enlarge x limits = 0,
      xmode = log,
      xlabel = $\lambda$,
      ymin = -0.005, ymax = 0.03,
    ]
      \foreach \u in {123, 124, 125, 126, 127, 128, 129, 134, 135, 136, 137, 139, 145, 146, 147, 148, 149, 156, 157, 158, 159, 167, 168, 169, 178, 179, 189, 234, 235, 236, 237, 238, 239, 245, 246, 247, 248, 249, 257, 258, 259, 267, 268, 269, 278, 279, 289, 345, 346, 347, 348, 349, 356, 357, 358, 359, 367, 368, 369, 378, 379, 389, 456, 457, 458, 459, 467, 468, 469, 478, 489, 567, 568, 569, 578, 579, 589, 678, 679, 689, 789}{
        \addplot[no marks, dash pattern = on 2pt off 1pt] table[x = lambda, y = \u]{data/lsqrs=0sigma=0.0.csv};
      }
      \foreach \u in {138, 256, 479}{
        \addplot[no marks, orange] table[x = lambda, y = \u] {data/lsqrs=0sigma=0.0.csv};
      }
    \end{axis}\end{tikzpicture}
    \caption{$\rho(\bm u, S_{\mathcal{I}_{\b N}(U_3)}^{\mathcal X} f)$ with $|\bm u| = 3$}
  \end{subfigure}
  \caption{LSQR with small bandwidths given in \eqref{eq:smallbandwidths} on exact data with $s=0$ (orange: active ANOVA terms $U^\star$, dashed: inactive ANOVA terms in the test function).}
  \label{fig:lsqrs=0sigma=0.0}
\end{figure}
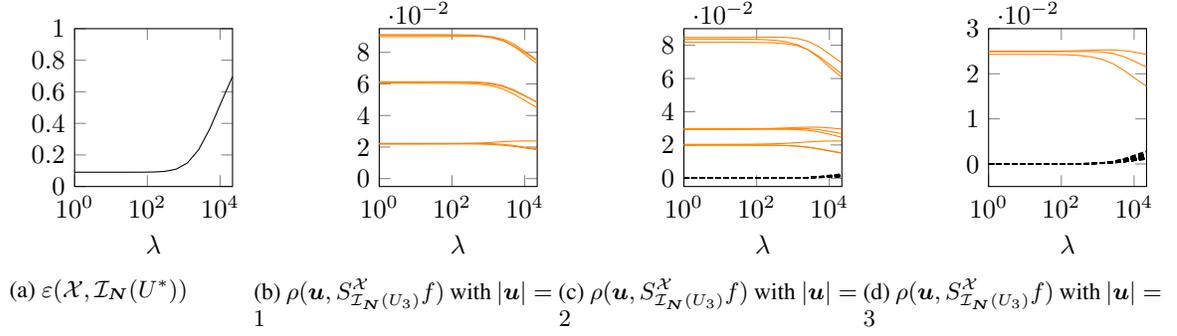
	
	\item
	In the next experiment we consider the effect of adding $10\%$ Gaussian noise which results in \cref{fig:lsqrs=0sigma=0.05}. We observe that the minimal $\L_2$-error $\eps(\mathcal{X},\mathcal{I}_{\b N}(U^\ast)) \approx 0.165$ increases in comparison to the previous experiment as expected. We get a typical behaviour of under- and overfitting in the $\L_2$-error which can be recognized in the global sensitivity indices as well:
	For small $\lambda$ the ANOVA terms $f_{\bm u}$ not occurring in the test function are not zero in our approximation since it fits the noise.
	For large $\lambda$ on the other hand we also penalize ANOVA terms $f_{\bm u}$ which actually occur in the test function as can be seen by the orange lines dropping.
	However, it is evident that we are able to achieve the goal of detecting the active ANOVA terms $f_{\bm u}$, $\u \in U^\ast$, since their sensitivity indices are well-separated from the sensitivity indices of the inactive terms.
	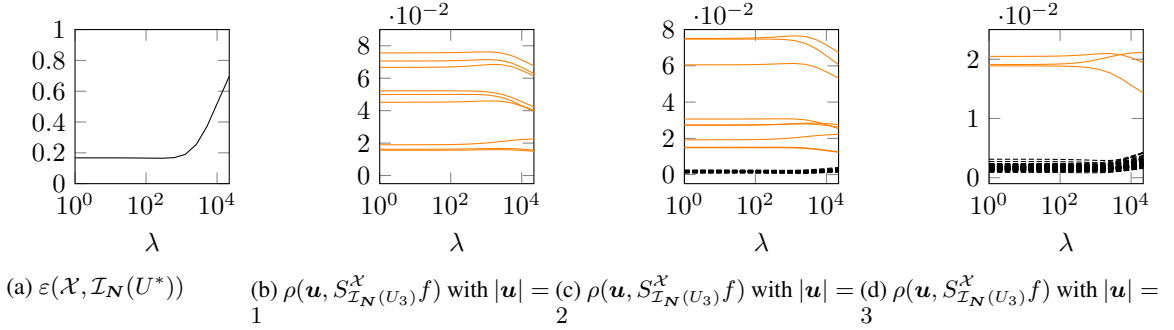
\begin{figure}
  \centering
  \begin{subfigure}[t]{0.24\linewidth}\centering
    \raggedleft
    \begin{tikzpicture}\begin{axis}[
      scale only axis, width = 2.05cm, height = 2.05cm,
      enlarge x limits = 0,
      xmode = log,
      xlabel = $\lambda$,
      ymin = 0, ymax = 1,
    ]
      \addplot[no marks] table[x = lambda, y = L2error] {data/lsqrs=0sigma=0.1.csv};
    \end{axis}\end{tikzpicture}
    \caption{$\eps(\mathcal{X},\mathcal{I}_{\b N}(U^\ast))$}
  \end{subfigure}
  \begin{subfigure}[t]{0.24\linewidth}\centering
    \raggedleft
    \begin{tikzpicture}\begin{axis}[
      scale only axis, width = 2.05cm, height = 2.05cm,
      enlarge x limits = 0,
      xmode = log,
      xlabel = $\lambda$,
      ymin = -0.005, ymax = 0.09,
    ]
      \foreach \u in {1, 2, 3, 4, 5, 6, 7, 8, 9}{
        \addplot[no marks, color = orange] table[x = lambda, y = \u] {data/lsqrs=0sigma=0.1.csv};
      }
    \end{axis}\end{tikzpicture}
    \caption{$\rho(\bm u, S_{\mathcal{I}_{\b N}(U_3)}^{\mathcal X} f)$ with $|\bm u| = 1$}
  \end{subfigure}
  \begin{subfigure}[t]{0.24\linewidth}\centering
    \raggedleft
    \begin{tikzpicture}\begin{axis}[
      scale only axis, width = 2.05cm, height = 2.05cm,
      enlarge x limits = 0,
      xmode = log,
      xlabel = $\lambda$,
      ymin = -0.005, ymax = 0.08,
    ]
      \foreach \u in {12, 14, 15, 16, 17, 19, 23, 24, 27, 28, 29, 34, 35, 36, 37, 39, 45, 46, 48, 57, 58, 59, 67, 68, 69, 78, 89}{
        \addplot[no marks, dash pattern = on 2pt off 1pt] table[x = lambda, y = \u] {data/lsqrs=0sigma=0.1.csv};
      }
      \foreach \u in {13, 18, 25, 26, 38, 47, 49, 56, 79}{
        \addplot[no marks, orange] table[x = lambda, y = \u] {data/lsqrs=0sigma=0.1.csv};
      }
    \end{axis}\end{tikzpicture}
    \caption{$\rho(\bm u, S_{\mathcal{I}_{\b N}(U_3)}^{\mathcal X} f)$ with $|\bm u| = 2$}
  \end{subfigure}
  \begin{subfigure}[t]{0.24\linewidth}\centering
    \raggedleft
    \begin{tikzpicture}\begin{axis}[
      scale only axis, width = 2.05cm, height = 2.05cm,
      enlarge x limits = 0,
      xmode = log,
      xlabel = $\lambda$,
      ymin = -0.001, ymax = 0.025,
    ]
      \foreach \u in {123, 124, 125, 126, 127, 128, 129, 134, 135, 136, 137, 139, 145, 146, 147, 148, 149, 156, 157, 158, 159, 167, 168, 169, 178, 179, 189, 234, 235, 236, 237, 238, 239, 245, 246, 247, 248, 249, 257, 258, 259, 267, 268, 269, 278, 279, 289, 345, 346, 347, 348, 349, 356, 357, 358, 359, 367, 368, 369, 378, 379, 389, 456, 457, 458, 459, 467, 468, 469, 478, 489, 567, 568, 569, 578, 579, 589, 678, 679, 689, 789}{
        \addplot[no marks, dash pattern = on 2pt off 1pt] table[x = lambda, y = \u] {data/lsqrs=0sigma=0.1.csv};
      }
      \foreach \u in {138, 256, 479}{
        \addplot[no marks, orange] table[x = lambda, y = \u] {data/lsqrs=0sigma=0.1.csv};
      }
    \end{axis}\end{tikzpicture}
    \caption{$\rho(\bm u, S_{\mathcal{I}_{\b N}(U_3)}^{\mathcal X} f)$ with $|\bm u| = 3$}
  \end{subfigure}
  \caption{LSQR with small bandwidths given in \eqref{eq:smallbandwidths} on noisy data with $s=0$ (orange: active ANOVA terms $U^\star$, dashed: inactive ANOVA terms in the test function).}
  \label{fig:lsqrs=0sigma=0.05}
\end{figure}
	
	\item
	In this experiment, we are interested in the addition of smoothness information and how it influences the minimization. We choose large bandwidths
	\begin{equation}\label{eq:largebandwidths}
		N_{\u}
		= \begin{cases}
			352 & \text{for } |\bm u| = 1 \\
			20 & \text{for } |\bm u| = 2 \\
			8 & \text{for } |\bm u| = 3,
		\end{cases}
	\end{equation}
	which results in $44\,968$ frequencies and an undetermined optimization problem \eqref{new_min_prob} with $\abs{\mathcal{I}_{\b N}(U_3)} > \abs{\mathcal{X}}$. The addition of the weighted $\ell_2$ regularization term in \eqref{new_min_prob} leads to a reduction of the \textit{search space} to the functions in the Sobolev type space $\sobolev{\omega_s}(\T^d)$ by incorporating the weights $\omega_s$ from \eqref{sobolevweights}. We choose $s = 1.5$ in $\bm{\hat W}(\bm u)$ which propagate functions of this dominating mixed smoothness since for the test function we have $f \in \sobolev{\omega_{1.5-\eps}}(\T^d)$ for every $\eps > 0$. 
	
	The results for the experiment without noise are shown in \cref{fig:lsqrs=32sigma=0.0}. We see that in this undetermined setting, the incorporation of smoothness information allowed us to efficiently solve the problem and with $\eps(\mathcal{X},\mathcal{I}_{\b N}(U^\ast)) \approx 1.8 \cdot 10^{-2}$ even achieve a better $\L_2$-error than in the previous experiments. The active ANOVA terms in $U^\ast$ are also clearly separable from the inactive ANOVA terms. We also recognize the different smoothness levels of the ANOVA terms $f_{\bm u}$, $\u \in U^\ast$, through the sensitivity indices as described in (i).
	\begin{figure}
  \centering
  \begin{subfigure}[t]{0.24\linewidth}\centering
    \raggedleft
    \begin{tikzpicture}\begin{axis}[
      scale only axis, width = 2.1cm, height = 2.1cm,
      enlarge x limits = 0,
      xmode = log,
      xlabel = $\lambda$,
      ymin = 0, ymax = 0.3,
    ]
      \addplot[no marks] table[x = lambda, y = L2error] {data/lsqrs=1.5sigma=0.0.csv};
    \end{axis}\end{tikzpicture}
    \caption{$\eps(\mathcal{X},\mathcal{I}_{\b N}(U^\ast))$}
  \end{subfigure}
  \begin{subfigure}[t]{0.24\linewidth}\centering
    \raggedleft
    \begin{tikzpicture}\begin{axis}[
      scale only axis, width = 2.1cm, height = 2.1cm,
      enlarge x limits = 0,
      xmode = log,
      xlabel = $\lambda$,
      ytick = {0,0.1,0.2},
      ymin = -0.005, ymax = 0.2,
    ]
      \foreach \u in {1, 2, 3, 4, 5, 6, 7, 8, 9}{
        \addplot[no marks, color = orange] table[x = lambda, y = \u] {data/lsqrs=1.5sigma=0.0.csv};
      }
    \end{axis}\end{tikzpicture}
    \caption{$\rho(\bm u, S_{\mathcal{I}_{\b N}(U_3)}^{\mathcal X} f)$ with $|\bm u| = 1$}
  \end{subfigure}
  \begin{subfigure}[t]{0.24\linewidth}\centering
    \raggedleft
    \begin{tikzpicture}\begin{axis}[
      scale only axis, width = 2.1cm, height = 2.1cm,
      enlarge x limits = 0,
      xmode = log,
      xlabel = $\lambda$,
      ymin = -0.005, ymax = 0.09,
    ]
      \foreach \u in {12, 14, 15, 16, 17, 19, 23, 24, 27, 28, 29, 34, 35, 36, 37, 39, 45, 46, 48, 57, 58, 59, 67, 68, 69, 78, 89}{
        \addplot[no marks, dash pattern = on 2pt off 1pt] table[x = lambda, y = \u] {data/lsqrs=1.5sigma=0.0.csv};
      }
      \foreach \u in {13, 18, 25, 26, 38, 47, 49, 56, 79}{
        \addplot[no marks, orange] table[x = lambda, y = \u] {data/lsqrs=1.5sigma=0.0.csv};
      }
    \end{axis}\end{tikzpicture}
    \caption{$\rho(\bm u, S_{\mathcal{I}_{\b N}(U_3)}^{\mathcal X} f)$ with $|\bm u| = 2$}
  \end{subfigure}
  \begin{subfigure}[t]{0.24\linewidth}\centering
    \raggedleft
    \begin{tikzpicture}\begin{axis}[
      scale only axis, width = 2.1cm, height = 2.1cm,
      enlarge x limits = 0,
      xmode = log,
      xlabel = $\lambda$,
      ytick = {0.01,0.02,0},
      ymin = -0.001, ymax = 0.02,
    ]
      \foreach \u in {123, 124, 125, 126, 127, 128, 129, 134, 135, 136, 137, 139, 145, 146, 147, 148, 149, 156, 157, 158, 159, 167, 168, 169, 178, 179, 189, 234, 235, 236, 237, 238, 239, 245, 246, 247, 248, 249, 257, 258, 259, 267, 268, 269, 278, 279, 289, 345, 346, 347, 348, 349, 356, 357, 358, 359, 367, 368, 369, 378, 379, 389, 456, 457, 458, 459, 467, 468, 469, 478, 489, 567, 568, 569, 578, 579, 589, 678, 679, 689, 789}{
        \addplot[no marks, dash pattern = on 2pt off 1pt] table[x = lambda, y = \u] {data/lsqrs=1.5sigma=0.0.csv};
      }
      \foreach \u in {138, 256, 479}{
        \addplot[no marks, orange] table[x = lambda, y = \u] {data/lsqrs=1.5sigma=0.0.csv};
      }
    \end{axis}\end{tikzpicture}
    \caption{$\rho(\bm u, S_{\mathcal{I}_{\b N}(U_3)}^{\mathcal X} f)$ with $|\bm u| = 3$}
  \end{subfigure}
  \caption{LSQR with large bandwidths given in \eqref{eq:largebandwidths} on exact data with $s=1.5$ (orange: active ANOVA terms $U^\star$, dashed: inactive ANOVA terms in the test function).}
  \label{fig:lsqrs=32sigma=0.0}
\end{figure}
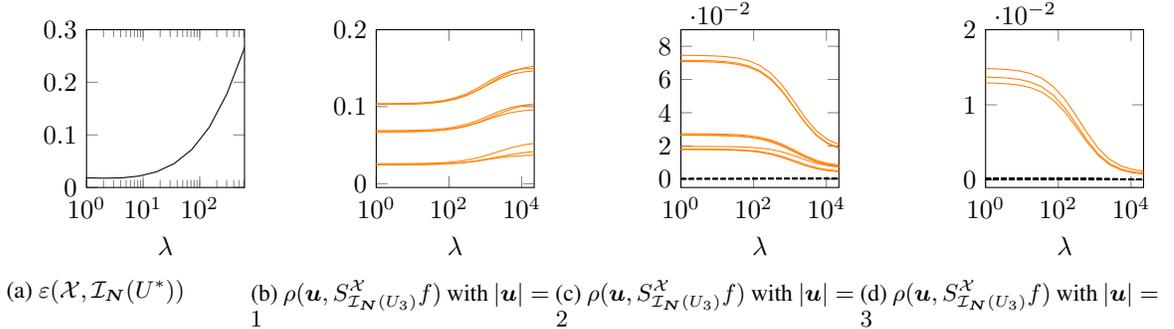
	
	\item
	The same experiment with the addition of $10\%$ Gaussian noise can be seen in \cref{fig:lsqrs=32sigma=0.1}.
	In comparison to the noisy experiment with small bandwidths in (ii) we achieve a smaller minimal $\L_2$-error of $\eps(\mathcal{X},\mathcal I(U^\ast)) \approx 0.189$.
	In addition, the different smoothness levels of the ANOVA terms $f_{\bm u}$, $\u \in U^\ast$, are clearly better distinguishable than in experiment (ii).
	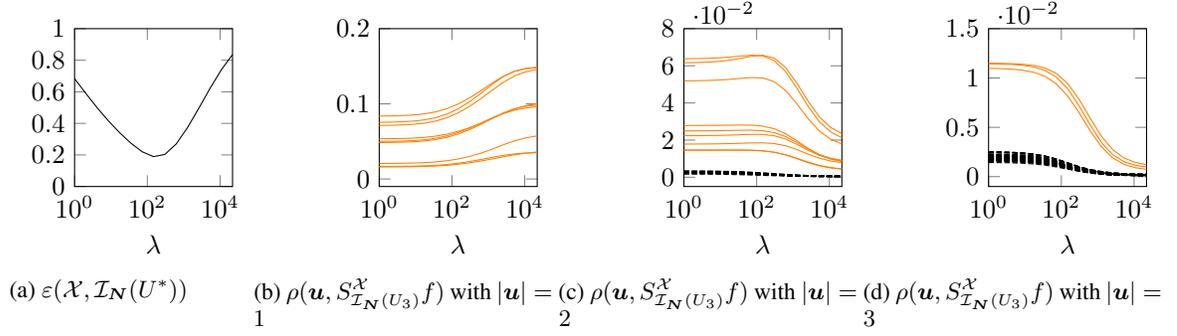
\begin{figure}
  \centering
  \begin{subfigure}[t]{0.24\linewidth}\centering
    \raggedleft
    \begin{tikzpicture}\begin{axis}[
      scale only axis, width = 2.1cm, height = 2.1cm,
      enlarge x limits = 0,
      xmode = log,
      xlabel = $\lambda$,
      ymin = 0, ymax = 1,
    ]
      \addplot[no marks] table[x = lambda, y = L2error] {data/lsqrs=1.5sigma=0.1.csv};
    \end{axis}\end{tikzpicture}
    \caption{$\eps(\mathcal{X},\mathcal{I}_{\b N}(U^\ast))$}
  \end{subfigure}
  \begin{subfigure}[t]{0.24\linewidth}\centering
    \raggedleft
    \begin{tikzpicture}\begin{axis}[
      scale only axis, width = 2.1cm, height = 2.1cm,
      enlarge x limits = 0,
      xmode = log,
      xlabel = $\lambda$,
      ytick = {0, 0.1,0.2},
      ymin = -0.01, ymax = 0.2,
    ]
      \foreach \u in {1, 2, 3, 4, 5, 6, 7, 8, 9}{
        \addplot[no marks, color = orange] table[x = lambda, y = \u] {data/lsqrs=1.5sigma=0.1.csv};
      }
    \end{axis}\end{tikzpicture}
    \caption{$\rho(\bm u, S_{\mathcal{I}_{\b N}(U_3)}^{\mathcal X} f)$ with $|\bm u| = 1$}
  \end{subfigure}
  \begin{subfigure}[t]{0.24\linewidth}\centering
    \raggedleft
    \begin{tikzpicture}\begin{axis}[
      scale only axis, width = 2.1cm, height = 2.1cm,
      enlarge x limits = 0,
      xmode = log,
      xlabel = $\lambda$,
      ymin = -0.005, ymax = 0.08,
    ]
      \foreach \u in {12, 14, 15, 16, 17, 19, 23, 24, 27, 28, 29, 34, 35, 36, 37, 39, 45, 46, 48, 57, 58, 59, 67, 68, 69, 78, 89}{
        \addplot[no marks, dash pattern = on 2pt off 1pt] table[x = lambda, y = \u] {data/lsqrs=1.5sigma=0.1.csv};
      }
      \foreach \u in {13, 18, 25, 26, 38, 47, 49, 56, 79}{
        \addplot[no marks, orange] table[x = lambda, y = \u] {data/lsqrs=1.5sigma=0.1.csv};
      }
    \end{axis}\end{tikzpicture}
    \caption{$\rho(\bm u, S_{\mathcal{I}_{\b N}(U_3)}^{\mathcal X} f)$ with $|\bm u| = 2$}
  \end{subfigure}
  \begin{subfigure}[t]{0.24\linewidth}\centering
    \raggedleft
    \begin{tikzpicture}\begin{axis}[
      scale only axis, width = 2.1cm, height = 2.1cm,
      enlarge x limits = 0,
      xmode = log,
      xlabel = $\lambda$,
      ymin = -0.001, ymax = 0.015,
    ]
      \foreach \u in {123, 124, 125, 126, 127, 128, 129, 134, 135, 136, 137, 139, 145, 146, 147, 148, 149, 156, 157, 158, 159, 167, 168, 169, 178, 179, 189, 234, 235, 236, 237, 238, 239, 245, 246, 247, 248, 249, 257, 258, 259, 267, 268, 269, 278, 279, 289, 345, 346, 347, 348, 349, 356, 357, 358, 359, 367, 368, 369, 378, 379, 389, 456, 457, 458, 459, 467, 468, 469, 478, 489, 567, 568, 569, 578, 579, 589, 678, 679, 689, 789}{
        \addplot[no marks, dash pattern = on 2pt off 1pt] table[x = lambda, y = \u] {data/lsqrs=1.5sigma=0.1.csv};
      }
      \foreach \u in {138, 256, 479}{
        \addplot[no marks, orange] table[x = lambda, y = \u] {data/lsqrs=1.5sigma=0.1.csv};
      }
    \end{axis}\end{tikzpicture}
    \caption{$\rho(\bm u, S_{\mathcal{I}_{\b N}(U_3)}^{\mathcal X} f)$ with $|\bm u| = 3$}
  \end{subfigure}
  \caption{LSQR with large bandwidths given in \eqref{eq:largebandwidths} on noisy data with $s=1.5$ (orange: active ANOVA terms $U^\star$, dashed: inactive ANOVA terms in the test function).}
  \label{fig:lsqrs=32sigma=0.1}
\end{figure}
	
\end{enumerate}

\begin{remark}
	The optimal choice of $\lambda$ in this paper is determined via the minimum $\L_2$-error $\eps(\mathcal{X},\mathcal{I}_{\b N}(U))$ from \eqref{L2error}. As mentioned before, this error can only be explicitly calculated if we generate the data as evaluations from a known synthetic test function. In the general case, one could use cross-validation techniques, e.g., with the methods in \cite{BaHiPo19} which allow for fast computation of the cross-validation score. This score can then be used to choose a $\lambda$ which will be close to the $\L_2(\mathbb T^d)$-optimal one, i.e., avoiding over- or underfitting.
\end{remark}

\subsection{Group Lasso}\label{sec:fista} 

As it is promoted in \cref{sec:anova}, after choosing a superposition threshold $d_s$ and the set of ANOVA terms $U_{d_s}$, we often have only a few of those ANOVA terms active. In other words, there is sparsity in the groups $\bm u \in U_{d_s}$.
The ANOVA terms $f_{\bm u}$ corresponding to these active groups, however, do not inherit this sparse structure, i.e., there is no sparsity in their Fourier coefficients.
Therefore, a method is needed which promotes sparsity on the scale of the groups $\bm u\in U_{d_s}$, but not within the coefficients of this group $\bm{\hat f}(\bm u)$, cf.\ \eqref{eq:somafm}. Incorporating this type of sparsity in the regularization combines the previously required sensitivity analysis into an automated process during the minimization.

A method which accomplishes this mixture of sparsity and non-sparsity constraints is called group lasso and was introduced in \cite{YL06}.
With group lasso, one seeks for the solution $\bm{\hat f}^\star$ of
\begin{equation}\label{eq:alligatoah}
	\min_{\bm{\hat f}} \frac 12 \|\bm y-\bm F(\mathcal X,\mathcal{I}_{\b N}(U))\bm{\hat f}\|_2^2
	+\lambda \sum_{\bm u\in U} \|\bm{\hat f}(\bm u)\|_{\bm{\hat W}(\bm u)}
\end{equation}
with grouped index set $\mathcal{I}_{\b N}(U)$, $\b N = (N_{\u})_{\u \in U}$, see \eqref{groupedset}, where we use the weighted norm
$\|\bm{\hat f}(\bm u)\|_{\bm{\hat W}(\bm u)}^2 = \bm{\hat f}(\bm u)\herm \bm{\hat W}(\bm u) \bm{\hat f}(\bm u)$ and $\bm{\hat W}(\bm u) = \diag(\omega(\bm k))_{\bm k\in\mathcal I_{N_{\bm u}}^{\bm u, d}}$ diagonal matrices with weight functions $\omega : \mathbb Z^d \to [1,\infty)$ for $\bm u \in U$.
Notice the lack of the square in the regularization term.
By incorporating squares in \eqref{eq:alligatoah} we would attain the same functional as in \cref{sec:lsqr}.
However, omitting them allows for an $\ell_1$-norm structure around the groups for a sparsifying effect similar to the basic lasso approach itself.
In this group case, it pushes towards a small number of active ANOVA terms.

\begin{remark}
	With differently sized groups, i.e., a different amount of Fourier coefficients in each group $\mathcal I_{N_{\u}}^{\bm u, d}$ of the index set, see \eqref{metallica}, lasso would set smaller groups with fewer frequencies to zero for sparsity as data fitting is easier with groups of more degrees of freedom. This could be tackled by introducing an additional weight for $\bm u\in U_{d_s}$ counterbalancing the group size. We work with groups of the same amount of frequencies throughout this section making this additional weight obsolete.
\end{remark}

The missing squares bring another difficulty with them, namely, the lack of differentiability in the regularization term.
Thus \eqref{eq:alligatoah} is a non-smooth convex minimization problem, which we tackle by the use of a proximal gradient method.
Entranced by the computational simplicity and quadratic rate of convergence, we use the fast iterative shrinking-thresholding algorithm (FISTA), cf.~\cite{BT09}, as our algorithm of choice.

In order to formulate the algorithm, we have to introduce the projection $p_L$.
For a vector $\bm{\hat h}$ of Fourier coefficients, $p_L(\bm{\hat h})$ is given by the minimizer of
\begin{equation}\label{eq:projection}
	\frac 12\left\|\bm{\hat f}-\left(\bm{\hat h}-\frac 1L\bm F\herm(\mathcal X,\mathcal{I}_{\b N}(U))\left(\bm F(\mathcal X,\mathcal{I}_{\b N}(U))\bm{\hat h}-\bm y\right)\right)\right\|_2^2
	+ \frac{\lambda}{L} \sum_{\bm u\in U}\|\bm{\hat f}_{\bm u}\|_{\bm{\hat W}(\bm u)} 
	.
\end{equation}
With that, we are able to write the FISTA algorithm adopted to our setting:
\begin{algorithm}[ht]
	\medskip
	
	\textbf{Input: }initial guess for the Fourier coefficents $\bm{\hat f}_0$,\\
	\phantom{\textbf{Input: }}stepsize parameters $L_0 > 0$, $\eta > 1$ and\\
	\phantom{\textbf{Input: }}maximal iteration count $K\in\mathbb N$
	\medskip
	
	\textbf{Output:} $\bm{\hat f}^\star = \bm{\hat f}^{(K)}$ minimizer of \eqref{eq:alligatoah}
	\medskip
	
	\begin{algorithmic}[1]
		\STATE{
			$\bm{\hat h}^{(1)} \leftarrow \bm{\hat f}^{(0)}$, $t_1 \leftarrow 1$
		}
		\FOR{$k=1, \dots, K$}
		\STATE{$L_k \leftarrow L_{k-1}$}
		\STATE{
			$ \bm{\hat f}^{(k)} \leftarrow p_{L_k}(\bm{\hat h}^{(k)}) $
		}
		\WHILE{
			$
			\|\bm y-\bm F(\mathcal X,\mathcal{I}_{\b N}(U)) \bm{\hat f}^{(k)}\|_2^2
			- \|\bm y-\bm F(\mathcal X,\mathcal{I}_{\b N}(U)) \bm{\hat h}^{(k)}\|_2^2
			$ \\
			\hfill$
			>2\langle \bm{\hat f}^{(k)}-\bm{\hat h}^{(k)},\, \bm F\herm(\mathcal X,\mathcal{I}_{\b N}(U))(\bm F(\mathcal X,\mathcal{I}_{\b N}(U))\bm{\hat h}^{(k)}-\bm y) \rangle
			+L_k \| \bm{\hat f}^{(k)}-\bm{\hat h}^{(k)} \|_2^2
			$
		}
		\STATE{$L_k \leftarrow \eta L_k$}
		\STATE{
			$ \bm{\hat f}^{(k)} \leftarrow p_{L_k}(\bm{\hat h}^{(k)}) $
		}
		\ENDWHILE
		\STATE{
			$ t_{k+1} \leftarrow (1+\sqrt{1+4t_k^2})/2 $
		}
		\STATE{
			$ \bm{\hat h}^{({k+1})} \leftarrow \bm{\hat f}^{(k)}+(t_k-1)/(t_{k+1})(\bm{\hat f}^{(k)}-\bm{\hat f}^{({k-1})}) $
		}
		\ENDFOR
	\end{algorithmic}
	\caption{FISTA for group lasso}
	\label{algo:fista}
\end{algorithm}

\begin{remark}
	The computational most expensive part is the projection $p_L$ which we will reduce to one multiplication with $\bm F(\mathcal X,\mathcal{I}_{\b N}(U))$ and one with its adjoint which we realize by the grouped transformation from \cref{sec:gft}.
	Note that there is also a variant of the FISTA algorithm which uses constant step size $L_k$ which makes the inner loop obsolete.
	For that, the complexity of the FISTA step boils down to the complexity of computing $p_L$.
	The quadratic convergence rate assures us, that a small number of iterations is sufficient.
\end{remark}

To compute the projection $p_L$ we develop an explicit formula which will be done in the next theorem with the use of the following lemma.

\begin{lemma}\label{lemma:lennykrawitz}
	Let $\mathcal I \subset \Z^d$ be a finite frequency index set, $\omega:\mathbb Z^d\to[0,\infty)$ a weight function, and $\bm{\hat W} = \diag(\omega(\k))_{\k\in\mathcal I}$.
	Further, let $\bm x_1^\star(\lambda)$ and $\bm x_2^\star(\xi)$ be the minimizers of
	$$
	\frac 12 \|\bm x - \bm y\|_2^2+\lambda\|\bm x\|_{\bm{\hat W}}
	\quad\text{and}\quad
	\frac 12 \|\bm x - \bm y\|_2^2+\xi\|\bm x\|_{\bm{\hat W}}^2,
	$$
	respectively.
	Then for $\xi > 0$ we have
	\begin{enumerate}[(i)]
		\item
		$$
		\bm x_2^\star(\xi)
		= \diag\left(\frac{1}{1+2\xi\omega(\k)}\right)_{\k\in\mathcal I}\bm y,
		$$
		\item
		$$
		\bm x_1^\star(\lambda) 
		= 
		\bm x_2^\star(\xi)
		\quad\text{for}\quad
		\lambda
		= \left\|\left(\frac{y_{\k}}{1/(2\xi)+\omega(\k)}\right)_{\k\in\mathcal I}\right\|_{\bm{\hat W}},
		$$
		\item and
		$$
		\bm x_1^\star(\lambda) = \bm 0
		\quad\text{for}\quad
		\lambda 
		\ge \left\|\left(\frac{y_{\k}}{\omega(\k)}\right)_{\k\in\mathcal I}\right\|_{\bm{\hat W}}.
		$$
	\end{enumerate}
\end{lemma}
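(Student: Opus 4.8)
The plan is to treat the three claims separately, exploiting that both functionals, $g_1(\bm x):=\frac12\|\bm x-\bm y\|_2^2+\lambda\|\bm x\|_{\bm{\hat W}}$ and $g_2(\bm x):=\frac12\|\bm x-\bm y\|_2^2+\xi\|\bm x\|_{\bm{\hat W}}^2$, are sums of a strictly convex, coercive data term and a convex regularizer. Hence each has a unique minimizer, and since the problems are convex the first-order optimality conditions are necessary \emph{and} sufficient; thus in every part it suffices to exhibit one point satisfying the appropriate condition. Throughout I would identify $\C^{|\mathcal I|}$ with $\R^{2|\mathcal I|}$ (equivalently, use Wirtinger derivatives) so that the usual calculus of convex analysis applies to the complex coefficient vectors, and I would use $\|\bm x\|_{\bm{\hat W}}^2=\sum_{\k\in\mathcal I}\omega(\k)|x_{\k}|^2$.

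For (i): the functional $g_2$ separates as $\sum_{\k\in\mathcal I}\bigl(\frac12|x_{\k}-y_{\k}|^2+\xi\omega(\k)|x_{\k}|^2\bigr)$, so I would minimize each summand individually; setting its gradient to zero gives $(1+2\xi\omega(\k))x_{\k}=y_{\k}$, and since $\xi>0$ and $\omega(\k)\ge0$ the factor $1+2\xi\omega(\k)$ is positive, which yields the claimed diagonal formula as the unique minimizer.

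For (iii): I would use that $\bm 0$ minimizes $g_1$ if and only if $\bm 0\in\partial g_1(\bm 0)=\{-\bm y\}+\lambda\,\partial\|\cdot\|_{\bm{\hat W}}(\bm 0)$, i.e. $\bm y\in\lambda\,\partial\|\cdot\|_{\bm{\hat W}}(\bm 0)$. The subdifferential of a norm at the origin is the unit ball of its dual norm; substituting $\bm u=\bm{\hat W}^{1/2}\bm x$ shows that the norm dual to $\|\cdot\|_{\bm{\hat W}}$ is $\bm z\mapsto\|(z_{\k}/\omega(\k))_{\k\in\mathcal I}\|_{\bm{\hat W}}$, so the inclusion is equivalent to $\lambda\ge\|(y_{\k}/\omega(\k))_{\k\in\mathcal I}\|_{\bm{\hat W}}$, which is exactly the stated condition.

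The main obstacle is (ii). Assuming $\bm y\neq\bm 0$ (for $\bm y=\bm 0$ one has $\lambda=0$ and both minimizers vanish), the point $\bm x_2^\star(\xi)$ from (i) is nonzero, and at nonzero arguments $\bm x\mapsto\|\bm x\|_{\bm{\hat W}}$ is differentiable with gradient $\bm{\hat W}\bm x/\|\bm x\|_{\bm{\hat W}}$, so $g_1$ is differentiable at $\bm x_2^\star(\xi)$. I would then insert $x_2^\star(\xi)_{\k}=y_{\k}/(1+2\xi\omega(\k))$ into the stationarity equation $\bm 0=(\bm x-\bm y)+\lambda\,\bm{\hat W}\bm x/\|\bm x\|_{\bm{\hat W}}$ and simplify: the $\k$-th component collapses to $y_{\k}\,\omega(\k)\bigl(\lambda/\|\bm x_2^\star(\xi)\|_{\bm{\hat W}}-2\xi\bigr)/(1+2\xi\omega(\k))$, which vanishes for all $\k$ precisely when $\lambda=2\xi\,\|\bm x_2^\star(\xi)\|_{\bm{\hat W}}$. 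The remaining step is to recognize, via the identity $\frac{1}{1/(2\xi)+\omega(\k)}=\frac{2\xi}{1+2\xi\omega(\k)}$, that $2\xi\,\|\bm x_2^\star(\xi)\|_{\bm{\hat W}}=\|(y_{\k}/(1/(2\xi)+\omega(\k)))_{\k\in\mathcal I}\|_{\bm{\hat W}}$, i.e. exactly the $\lambda$ in the statement; then convexity makes stationarity sufficient for a global minimum and strict convexity of $g_1$ makes it the unique one, giving $\bm x_1^\star(\lambda)=\bm x_2^\star(\xi)$. Beyond this the only delicate points are clerical: keeping the complex moduli and the weight normalization ($\omega$ versus $\omega^2$ in $\|\cdot\|_{\bm{\hat W}}$) consistent, and, if one literally allows $\omega(\k)=0$, reading the quotients $y_{\k}/\omega(\k)$ as $+\infty$ when $y_{\k}\neq0$ — an issue that does not arise in the application, where $\omega\ge1$.
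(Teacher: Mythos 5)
Your proposal is correct, and for parts (i) and (ii) it is essentially the paper's own argument: root-finding for the smooth Tikhonov functional, then the ansatz $\bm x=\bm x_2^\star(\xi)$ in the stationarity equation of the non-smooth functional, with the componentwise factor $\lambda/\|\bm x_2^\star(\xi)\|_{\bm{\hat W}}-2\xi$ and the rewriting $2\xi/(1+2\xi\omega(\k))=1/(1/(2\xi)+\omega(\k))$ appearing in the same way; convexity then upgrades stationarity to global optimality in both treatments. The genuine divergence is (iii): you characterize optimality of $\bm 0$ via the subdifferential inclusion $\bm y\in\lambda\,\partial\|\cdot\|_{\bm{\hat W}}(\bm 0)$ and identify the dual norm as $\bm z\mapsto\|(z_{\k}/\omega(\k))_{\k\in\mathcal I}\|_{\bm{\hat W}}$, whereas the paper avoids subdifferential calculus altogether: it reduces to nonnegative entries (the polar-coordinate step), bounds $\frac 12\|\bm 0-\bm y\|_2^2+\lambda\|\bm 0\|_{\bm{\hat W}}$ by $\frac 12\|\bm x-\bm y\|_2^2$ plus the inner product $\langle\diag(\sqrt{\omega(\k)})_{\k\in\mathcal I}\,\bm x,\;\diag(1/\sqrt{\omega(\k)})_{\k\in\mathcal I}\,\bm y\rangle$, and finishes with the Cauchy--Schwarz inequality --- essentially the same duality, carried out by hand. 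Your route is the standard convex-analysis one and buys a clean treatment of the boundary case $\lambda=\|(y_{\k}/\omega(\k))_{\k\in\mathcal I}\|_{\bm{\hat W}}$ (the paper's proof as written invokes a strict inequality even though the lemma states $\ge$), while the paper's route is more elementary and self-contained. Both treatments share the caveat you flag: with weights merely in $[0,\infty)$ the quotients $y_{\k}/\omega(\k)$ in (iii), and in (ii) the division by $\|\bm x\|_{\bm{\hat W}}$ (which may then be only a seminorm, affecting differentiability at the ansatz point), require the convention you mention; this is a non-issue in the application, where $\omega\ge 1$.
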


\begin{proof}
	\begin{enumerate}[(i)]
		\item 
		We simply calculate the roots of the gradient
		$$
		\bm x-\bm y + 2\xi\bm{\hat W}\bm x
		\overset != \bm 0
		\quad\Leftrightarrow\quad
		\bm x_2^\star(\xi)
		= \diag\left(\frac{1}{1+2\xi\omega(\k)}\right)_{\k\in\mathcal I}\bm y.
		$$
		The convexity of the function assures the minimizing property.
		\item
		Similarly to (i), computing $\bm x_1^\star(\lambda)$ can be done by root-finding of
		$$
		\nabla_{\bm x} \left( \frac 12\|\bm x-\bm y\|_2^2+\lambda\|\bm x\|_{\bm{\hat W}} \right)
		= \bm x-\bm y + \frac{\lambda}{\|\bm x\|_{\bm{\hat W}}} \diag(\omega(\k))_{\k\in\mathcal I}\bm x
		$$
		since the function in question is strictly convex.
		Using the ansatz $\bm x = \bm x_2^\star(\xi)$ and (i) leads to
		$$
		\left(\diag\left(\frac{1}{1+2\xi\omega(\k)}\right)_{\k\in\mathcal I} - \bm I
		+ \frac{\lambda
			\diag\left(\frac{\omega(\k)}{1+2\xi\omega(\k)}\right)_{\k\in\mathcal I}
		}{\left\|
			\diag\left(\frac{1}{1+2\xi\omega(\k)}\right)_{\k\in\mathcal I}\bm y
			\right\|_{\bm{\hat W}}}
		\right)\bm y
		\overset != \bm0.
		$$
		This is certainly fulfilled if the diagonal matrix is zero in each component, i.e.,
		$$
		\frac{1}{1+2\xi\omega(\k)}-1+\frac{\lambda}{\left\|\diag\left(\frac{1}{1+2\xi\omega(\k)}\right)_{\k\in\mathcal I}\bm y\right\|_{\bm{\hat W}}}\frac{\omega(\k)}{1+2\xi\omega(\k)}
		\overset != 0,
		$$
		or equivalently,
		$$
		\lambda
		\overset != \left\|\diag\left(\frac{1}{1/(2\xi)+\omega(\k)}\right)_{\k\in\mathcal I}\bm y\right\|_{\bm{\hat W}}.
		$$
		\item
		We want to minimize
		$$
		\frac 12\|\bm x-\bm y\|_2^2+\lambda\|\bm x\|_{\bm{\hat W}}.
		$$
		Using polar coordinates we see that varying the arguments in $\bm x$ does not change the second summand.
		Hence, $\bm x$ and $\bm y$ have to have the same arguments in each component.
		Without loss of generality, we restrict to the moduli of these numbers, i.e., positive numbers.
		
		Then
		\begin{multline*}
			\frac 12\|\bm 0 - \bm y\|_2^2
			+\lambda \|\bm 0\|_{\bm{\hat W}} 
			=
			\frac 12 \|-\bm x+\bm x- \bm y\|_2^2 \\
			\le \frac 12\|\bm x- \bm y\|_2^2 + 
			\left\langle\diag\left(\sqrt{\omega(\k)}\right)_{\k\in\mathcal I}\bm x,\,
			\diag\left(\frac{1}{\sqrt{\omega(\k)}}\right)_{\k\in\mathcal I}\bm y\right\rangle.
		\end{multline*}
		By the Cauchy-Schwarz inequality,  we obtain
		$$
		\frac 12\|\bm 0 - \bm y\|_2^2
		+\lambda \|\bm 0\|_2 
		\le \frac 12\|\bm x- \bm y\|_2^2 + \|\bm x\|_{\bm{\hat W}}\left\|\diag\left(1/\sqrt{\omega(\k)}\right)_{\k\in\mathcal I}\bm y\right\|_2.
		$$
		Now making use of $\|\diag(1/\sqrt{\omega(\k)})_{\k\in\mathcal I} \bm y\|_2 < \lambda$ we end up with
		$$
		\frac 12\|\bm 0 - \bm y\|_2^2
		+\lambda \|\bm 0\|_2 
		\le \frac 12\|\bm x- \bm y\|_2^2 + \lambda\|\bm x\|_{\bm{\hat W}}
		$$
		which holds for all $\bm x$ and, hence, $\bm 0$ is the minimizer for this case.
	\end{enumerate}
\end{proof}

Now, using \cref{lemma:lennykrawitz}, we prove an explicit formula for the projection $p_L$ defined via the minimizer of \eqref{eq:projection}.

\begin{theorem}\label{theorem:projection}
	Let $U$ be a subset of $\mathcal P(\{1,\dots,d\})$ and diagonal matrices $\bm{\hat W}(\bm u) = \diag(\omega(\k))_{\k\in\mathcal I_{N_{\bm u}}^{\bm u, d}}$ with $ I_{N_{\bm u}}^{\bm u, d}$ from \eqref{metallica} and strictly positive weights for $\bm u\in U$.
	For any $\bm y\in\mathbb C^N$,
	$N = \sum_{\bm u\in U} (1-N_{\u})^{\au}$, and $\lambda > 0$, the minimizer of
	$$
	\frac 12 \|\bm x-\bm y\|_2^2+\lambda\sum_{\bm u\in U} \|\bm x_{\bm u}\|_{\bm{\hat W(\bm u)}}
	$$
	is given by
	$$
	\bm x^\star_{\bm u}
	= \begin{cases}
		\diag\left(\frac{1}{1+\xi(\bm u)\omega(\k)}\right)_{\k\in\mathcal I_{N_{\u}}^{\bm u, d}}\bm y & \text{for } \lambda \le \left\|\left(\frac{y_{\k}}{\omega(\k)}\right)_{\k\in\mathcal I_{N_{\u}}^{\bm u, d}}\right\|_{\bm{\hat W}(\bm u)} \\
		\bm 0 & \text{otherwise}
	\end{cases}
	$$
	for $\bm u\in U$ with $\xi(\bm u)$ fulfilling
	$$
	\left\|\left(\frac{y_{\k}}{1/\xi(\bm u)+\omega(\k)}\right)_{\k\in\mathcal I_{N_{\u}}^{\bm u, d}}\right\|_{\bm{\hat W}(\bm u)}
	= \lambda.
	$$
\end{theorem}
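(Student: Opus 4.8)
The plan is to exploit that the functional in the theorem is \emph{separable} over the groups $\u \in U$. Writing $\bm x = (\bm x_{\u})_{\u \in U}$ and $\bm y = (\bm y_{\u})_{\u \in U}$ according to the block structure of the grouped index set, cf.\ \eqref{bananarama}, the quadratic term splits as $\|\bm x - \bm y\|_2^2 = \sum_{\u \in U} \|\bm x_{\u} - \bm y_{\u}\|_2^2$, while the regularization term is already a sum over $\u \in U$ of quantities depending only on the block $\bm x_{\u}$. Hence the minimization decouples into $|U|$ independent subproblems
$$
\min_{\bm x_{\u}} \tfrac 12 \|\bm x_{\u} - \bm y_{\u}\|_2^2 + \lambda \|\bm x_{\u}\|_{\bm{\hat W}(\u)}, \qquad \u \in U,
$$
each of which is precisely the problem treated in \cref{lemma:lennykrawitz}, with $\mathcal I = \mathcal I_{N_{\u}}^{\u,d}$, data vector $\bm y_{\u}$, and weight matrix $\bm{\hat W}(\u)$. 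It therefore suffices to solve one such subproblem and to reassemble the per-group minimizers into $\bm x^\star = (\bm x_{\u}^\star)_{\u \in U}$.

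For a fixed group $\u$ I would distinguish two cases according to the size of $\lambda$ relative to the threshold $\tau(\u) \coloneqq \|(y_{\k}/\omega(\k))_{\k \in \mathcal I_{N_{\u}}^{\u,d}}\|_{\bm{\hat W}(\u)}$. If $\lambda \ge \tau(\u)$, part (iii) of \cref{lemma:lennykrawitz} gives directly $\bm x_{\u}^\star = \bm 0$, which is the second branch. If $\lambda < \tau(\u)$, I invoke part (ii), which says that $\bm x_{\u}^\star$ equals the explicit minimizer $\bm x_2^\star(\xi)$ of the \emph{squared}-norm problem from part (i), provided $\xi > 0$ is chosen so that $\lambda = \|(y_{\k}/(1/(2\xi)+\omega(\k)))_{\k}\|_{\bm{\hat W}(\u)}$. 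Substituting $\xi(\u) \coloneqq 2\xi$ turns the closed form $\bm x_2^\star(\xi) = \diag(1/(1+2\xi\omega(\k)))_{\k}\bm y_{\u}$ and its side condition into exactly the first branch of the theorem together with the equation determining $\xi(\u)$.

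The one point that genuinely requires an argument — and the main obstacle — is the \emph{existence and uniqueness} of such a $\xi(\u)$ whenever $0 < \lambda < \tau(\u)$; everything else is bookkeeping. For this I would study the scalar function
$$
g(\xi) \coloneqq \left\|\left(\frac{y_{\k}}{1/\xi + \omega(\k)}\right)_{\k \in \mathcal I_{N_{\u}}^{\u,d}}\right\|_{\bm{\hat W}(\u)}, \qquad \xi > 0,
$$
whose square equals $\sum_{\k} \omega(\k)\,|y_{\k}|^2 / (1/\xi + \omega(\k))^2$. Each summand is continuous and strictly increasing in $\xi$, since $1/\xi$ is strictly decreasing and the weights are strictly positive; hence $g$ is continuous and (in the nondegenerate case $\bm y_{\u} \neq \bm 0$) strictly increasing on $(0,\infty)$, with $\lim_{\xi \to 0^+} g(\xi) = 0$ and $\lim_{\xi \to \infty} g(\xi) = \tau(\u)$. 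By the intermediate value theorem there is then exactly one $\xi(\u) \in (0,\infty)$ with $g(\xi(\u)) = \lambda$ for every $\lambda \in (0,\tau(\u))$, which is what part (ii) needs. The degenerate case $\bm y_{\u} = \bm 0$ has $\tau(\u) = 0$, falls into the second branch, and yields $\bm x_{\u}^\star = \bm 0 = \bm y_{\u}$, in agreement with the first branch as well; the boundary value $\lambda = \tau(\u)$ corresponds to $\xi(\u) \to \infty$ and again gives $\bm 0$, matching part (iii). Collecting the blocks $\bm x_{\u}^\star$ gives the stated formula for the minimizer; applying it with $\bm y$ the shifted vector inside the norm in \eqref{eq:projection} and $\lambda$ replaced by $\lambda/L$ then yields the explicit formula for the projection $p_L$ used in \cref{algo:fista}.
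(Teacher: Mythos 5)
Your proposal is correct and follows essentially the same route as the paper: the objective separates over the groups $\bm u\in U$, and each block subproblem is resolved by \cref{lemma:lennykrawitz} (i)--(iii), with the substitution $\xi(\bm u)=2\xi$ recovering the stated formulas. Your additional argument for existence and uniqueness of $\xi(\bm u)$ via monotonicity of $g$ and the intermediate value theorem is a welcome refinement that the paper leaves implicit (it only addresses finding $\xi(\bm u)$ numerically in the subsequent remark), but it does not change the approach.
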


\begin{proof}
	Because of the structure of the objective function, we can minimize for every $\bm u\in U$ separately.
	\cref{lemma:lennykrawitz} (i)-(iii) then results the assertion.
\end{proof}

\begin{remark}
	In \cref{theorem:projection} we still have to find $\xi(\bm u)$ such that
	$$
	t(\xi)
	\coloneqq \left\|\left(\frac{y_{\k}}{1/\xi(\bm u)+\omega(\k)}\right)_{\k\in\mathcal I_{N_{\u}}^{\bm u, d}}\right\|_{\bm{\hat W}(\bm u)}^2
	= \lambda^2.
	$$
	We do that in two steps:
	\begin{enumerate}[(i)]
		\item
		First, we determine whether $t(1) \ge \lambda^2$ or not.
		If so, $\xi$ has to be smaller than one because of the monotonicity of the objective function.
		Then we use the bisection method on $t(\cdot)$ with the initial interval $[0, 1]$.
		Otherwise we do the same for the function $t(1/\cdot)$.
		\item
		To obtain a more precise result we use some iterations with Newton's method afterwards.
	\end{enumerate}
\end{remark}

Now for the numerical experiments. As in \cref{sec:lsqr}, we use the nine-dimensional function \eqref{eq:testfun} which we sample at $10\,000$ uniform i.i.d.\,random nodes $\mathcal X \subset \mathbb T^9$.
For four different settings we launched \cref{algo:fista} with $\lambda\in [\e^0,\e^{10}]$.
We started with the initial guess $\bm{\hat f}_0 = \bm 0$ and the biggest $\lambda = \e^{10}$ with the minimizer $\bm{\hat f}^\star(\lambda)$ which is presumably close to $\bm 0$ for this case.
For the remaining values of $\lambda$ we used the last solution as new initial guess since $\bm{\hat f}^\star(\lambda)$ depends continuously on $\lambda$.
For evaluation, we plotted the $\L_2$-error and the global sensitivity indices~\eqref{eq:gsi} for each $|\bm u|=1,2,3$ separately. Note that the errors and global sensitivity indices have been averaged over 100 random draws of the node set $\mathcal X$. 

\begin{enumerate}[(i)]
	\item
	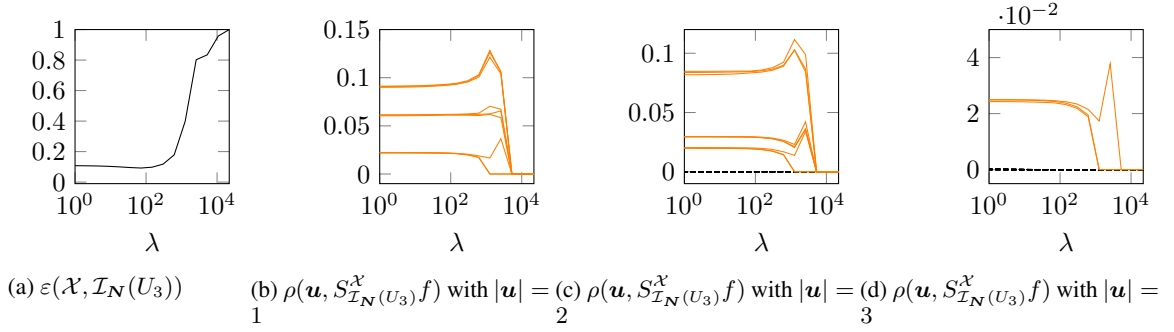
\begin{figure}
  \centering
  \begin{subfigure}[t]{0.24\linewidth}\centering
    \raggedleft
    \begin{tikzpicture}\begin{axis}[
      scale only axis, width = 2.05cm, height = 2.05cm,
      enlarge x limits = 0,
      xmode = log,
      xlabel = $\lambda$,
      ymin = -0.01, ymax = 1,
    ]
      \addplot[no marks] table[x = lambda, y = L2error] {data/fistas=0sigma=0.csv};
    \end{axis}\end{tikzpicture}
    \caption{$\eps(\mathcal{X},\mathcal{I}_{\b N}(U_3))$}
  \end{subfigure}
  \begin{subfigure}[t]{0.24\linewidth}\centering
    \raggedleft
    \begin{tikzpicture}\begin{axis}[
    		yticklabel style={
    			/pgf/number format/fixed,
    			/pgf/number format/precision=5
    		},
    		scaled y ticks=false,
      scale only axis, width = 2.05cm, height = 2.05cm,
      enlarge x limits = 0,
      xmode = log,
      xlabel = $\lambda$,
      ymin = -0.01, ymax = 0.15,
    ]
      \foreach \u in {1, 2, 3, 4, 5, 6, 7, 8, 9}{
        \addplot[no marks, color = orange] table[x = lambda, y = \u] {data/fistas=0sigma=0.csv};
      }
    \end{axis}\end{tikzpicture}
    \caption{$\rho(\bm u, S_{\mathcal{I}_{\b N}(U_3)}^{\mathcal X} f)$ with $|\bm u| = 1$}
  \end{subfigure}
  \begin{subfigure}[t]{0.24\linewidth}\centering
    \raggedleft
    \begin{tikzpicture}\begin{axis}[
    		yticklabel style={
    			/pgf/number format/fixed,
    			/pgf/number format/precision=5
    		},
    		scaled y ticks=false,
      scale only axis,width = 2.05cm, height = 2.05cm,
      enlarge x limits = 0,
      xmode = log,
      xlabel = $\lambda$,
      ymin = -0.01, ymax = 0.12,
    ]
      \foreach \u in {12, 14, 15, 16, 17, 19, 23, 24, 27, 28, 29, 34, 35, 36, 37, 39, 45, 46, 48, 57, 58, 59, 67, 68, 69, 78, 89}{
        \addplot[no marks, dash pattern = on 2pt off 1pt] table[x = lambda, y = \u] {data/fistas=0sigma=0.csv};
      }
      \foreach \u in {13, 18, 25, 26, 38, 47, 49, 56, 79}{
        \addplot[no marks, orange] table[x = lambda, y = \u] {data/fistas=0sigma=0.csv};
      }
    \end{axis}\end{tikzpicture}
    \caption{$\rho(\bm u, S_{\mathcal{I}_{\b N}(U_3)}^{\mathcal X} f)$ with $|\bm u| = 2$}
  \end{subfigure}
  \begin{subfigure}[t]{0.24\linewidth}\centering
    \raggedleft
    \begin{tikzpicture}\begin{axis}[
      scale only axis, width = 2.05cm, height = 2.05cm,
      enlarge x limits = 0,
      xmode = log,
      xlabel = $\lambda$,
      ymin = -0.005, ymax = 0.05,
    ]
      \foreach \u in {123, 124, 125, 126, 127, 128, 129, 134, 135, 136, 137, 139, 145, 146, 147, 148, 149, 156, 157, 158, 159, 167, 168, 169, 178, 179, 189, 234, 235, 236, 237, 238, 239, 245, 246, 247, 248, 249, 257, 258, 259, 267, 268, 269, 278, 279, 289, 345, 346, 347, 348, 349, 356, 357, 358, 359, 367, 368, 369, 378, 379, 389, 456, 457, 458, 459, 467, 468, 469, 478, 489, 567, 568, 569, 578, 579, 589, 678, 679, 689, 789}{
        \addplot[no marks, dash pattern = on 2pt off 1pt] table[x = lambda, y = \u] {data/fistas=0sigma=0.csv};
      }
      \foreach \u in {138, 256, 479}{
        \addplot[no marks, orange] table[x = lambda, y = \u] {data/fistas=0sigma=0.csv};
      }
    \end{axis}\end{tikzpicture}
    \caption{$\rho(\bm u, S_{\mathcal{I}_{\b N}(U_3)}^{\mathcal X} f)$ with $|\bm u| = 3$}
  \end{subfigure}
  \caption{FISTA with small bandwidths given in \eqref{eq:smallbandwidths} on exact data with $s=0$ (orange: active ANOVA terms $U^\star$, dashed: inactive ANOVA terms in the test function).}
  \label{fig:fistas=0sigma=0.0}
\end{figure}
	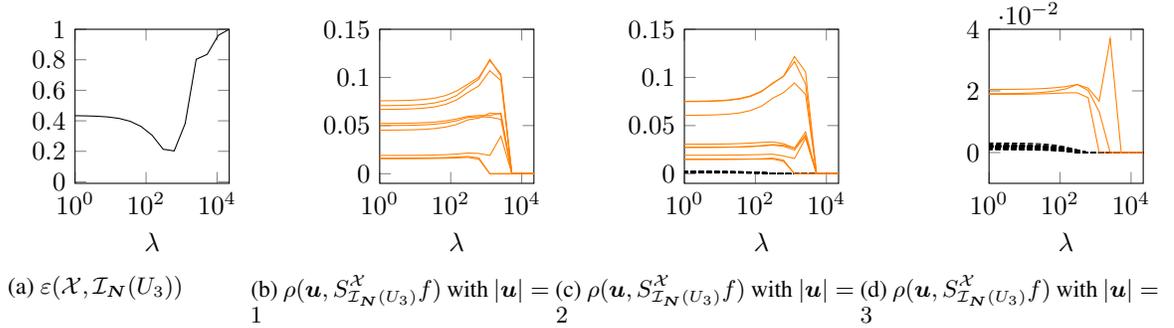
\begin{figure}
  \centering
  \begin{subfigure}[t]{0.24\linewidth}\centering
    \raggedleft
    \begin{tikzpicture}\begin{axis}[
      scale only axis, width = 2.05cm, height = 2.05cm,
      enlarge x limits = 0,
      xmode = log,
      xlabel = $\lambda$,
      ymin = -0.01, ymax = 1,
    ]
      \addplot[no marks] table[x = lambda, y = L2error] {data/fistas=0.0sigma=0.1.csv};
    \end{axis}\end{tikzpicture}
    \caption{$\eps(\mathcal{X},\mathcal{I}_{\b N}(U_3))$}
  \end{subfigure}
  \begin{subfigure}[t]{0.24\linewidth}\centering
    \raggedleft
    \begin{tikzpicture}\begin{axis}[
		 yticklabel style={
		 	/pgf/number format/fixed,
		 	/pgf/number format/precision=5
		 },
		 scaled y ticks=false,
      scale only axis, width = 2.05cm, height = 2.05cm,
      enlarge x limits = 0,
      xmode = log,
      xlabel = $\lambda$,
      ymin = -0.01, ymax = 0.15,
    ]
      \foreach \u in {1, 2, 3, 4, 5, 6, 7, 8, 9}{
        \addplot[no marks, color = orange] table[x = lambda, y = \u] {data/fistas=0.0sigma=0.1.csv};
      }
    \end{axis}\end{tikzpicture}
    \caption{$\rho(\bm u, S_{\mathcal{I}_{\b N}(U_3)}^{\mathcal X} f)$ with $|\bm u| = 1$}
  \end{subfigure}
  \begin{subfigure}[t]{0.24\linewidth}\centering
    \raggedleft
    \begin{tikzpicture}\begin{axis}[
    		yticklabel style={
    			/pgf/number format/fixed,
    			/pgf/number format/precision=5
    		},
    		scaled y ticks=false,
      scale only axis, width = 2.05cm, height = 2.05cm,
      enlarge x limits = 0,
      xmode = log,
      xlabel = $\lambda$,
      ymin = -0.01, ymax = 0.15,
    ]
      \foreach \u in {12, 14, 15, 16, 17, 19, 23, 24, 27, 28, 29, 34, 35, 36, 37, 39, 45, 46, 48, 57, 58, 59, 67, 68, 69, 78, 89}{
        \addplot[no marks, dash pattern = on 2pt off 1pt] table[x = lambda, y = \u] {data/fistas=0.0sigma=0.1.csv};
      }
      \foreach \u in {13, 18, 25, 26, 38, 47, 49, 56, 79}{
        \addplot[no marks, orange] table[x = lambda, y = \u] {data/fistas=0.0sigma=0.1.csv};
      }
    \end{axis}\end{tikzpicture}
    \caption{$\rho(\bm u, S_{\mathcal{I}_{\b N}(U_3)}^{\mathcal X} f)$ with $|\bm u| = 2$}
  \end{subfigure}
  \begin{subfigure}[t]{0.24\linewidth}\centering
    \raggedleft
    \begin{tikzpicture}\begin{axis}[
      scale only axis,width = 2.05cm, height = 2.05cm,
      enlarge x limits = 0,
      xmode = log,
      xlabel = $\lambda$,
      ymin = -0.01, ymax = 0.04,
    ]
      \foreach \u in {123, 124, 125, 126, 127, 128, 129, 134, 135, 136, 137, 139, 145, 146, 147, 148, 149, 156, 157, 158, 159, 167, 168, 169, 178, 179, 189, 234, 235, 236, 237, 238, 239, 245, 246, 247, 248, 249, 257, 258, 259, 267, 268, 269, 278, 279, 289, 345, 346, 347, 348, 349, 356, 357, 358, 359, 367, 368, 369, 378, 379, 389, 456, 457, 458, 459, 467, 468, 469, 478, 489, 567, 568, 569, 578, 579, 589, 678, 679, 689, 789}{
        \addplot[no marks, dash pattern = on 2pt off 1pt] table[x = lambda, y = \u] {data/fistas=0.0sigma=0.1.csv};
      }
      \foreach \u in {138, 256, 479}{
        \addplot[no marks, orange] table[x = lambda, y = \u] {data/fistas=0.0sigma=0.1.csv};
      }
    \end{axis}\end{tikzpicture}
    \caption{$\rho(\bm u, S_{\mathcal{I}_{\b N}(U_3)}^{\mathcal X} f)$ with $|\bm u| = 3$}
  \end{subfigure}
  \caption{FISTA with small bandwidths given in \eqref{eq:smallbandwidths} on noisy data with $s=0$ (orange: active ANOVA terms $U^\star$, dashed: inactive ANOVA terms in the test function).}
  \label{fig:fistas=0sigma=0.1}
\end{figure}
	In the first setting, we choose $\omega(\bm k) = 1$ for all $\bm k\in\mathcal{I}_{\b N}(U)$, which penalizes all frequencies equally, cf.~\cref{sec:lsqr}.
	To compensate for the lack of information we use the small bandwidths \eqref{eq:smallbandwidths} such that we are in an overdetermined setting.
	
	The results of this first experiment without noise can be seen in \cref{fig:fistas=0sigma=0.0}.
	The minimal relative $\L_2$-error is $\eps(\mathcal{X},\mathcal{I}_{\b N}(U_3)) \approx 9.2\cdot 10^{-2}$ which is similar to the observations in \cref{sec:lsqr}.
	We are able to distinguish the global sensitivity indices of the active ANOVA terms very clearly from the ones not occurring in the test function.
	In (b) we observe that the global sensitivity indices $\rho(\bm u,S_{\mathcal{I}_{\b N}(U_3)}^{\mathcal X} f)$
	are larger than zero for all one-dimensional ANOVA terms $f_{\bm u}$ occuring in the test function.
	In (c) and (d) we have the expected sparsifying behaviour from the $\ell_1$-regularization, as many groups are penalized to be exactly zero, for larger $\lambda$ even the ones which occur in the test function. In summary, the group lasso regularization is able to detect the correct active terms. We also recognize the different smoothness of the ANOVA terms as mentioned in experiment (i) from \cref{sec:lsqr}.
	
	\item
	In the next experiment we added $10\%$ Gaussian noise which results in \cref{fig:fistas=0sigma=0.1}.
	The minimal $\L_2$-error has increased to $\eps(\mathcal{X},\mathcal{I}_{\b N}(U_3)) \approx 0.202$.
	We observe the typical behaviour of under- and overfitting in the $\L_2$-error, see (a). This can also be recognized in the global sensitivity indices:
	For small $\lambda$ the ANOVA terms $f_{\bm u}$ not occurring in the test function are not zero in our approximation since it fits the noise.
	For large $\lambda$ on the other hand we also penalize active ANOVA terms $f_{\bm u}$, $\u \in U^\ast$ which can be seen by the orange lines dropping to zero.
	Overall, the active ANOVA terms are distinguishable from the inactive terms and the group lasso is able to correctly detect the active set.
	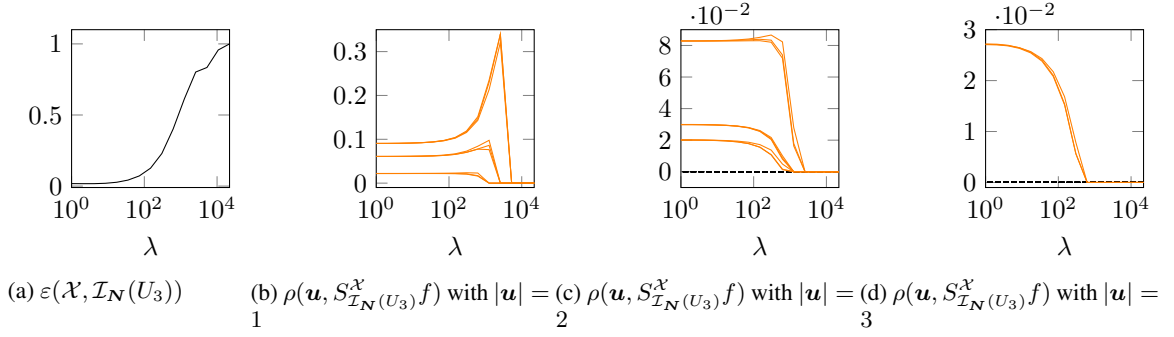
\begin{figure}
  \centering
  \begin{subfigure}[t]{0.24\linewidth}\centering
    \raggedleft
    \begin{tikzpicture}\begin{axis}[
      scale only axis, width = 2.1cm, height = 2.1cm,
      enlarge x limits = 0,
      xmode = log,
      xlabel = $\lambda$,
      ymin = -0.01, ymax = 1.1,
    ]
      \addplot[no marks] table[x = lambda, y = L2error] {data/fistas=1.5sigma=0.0.csv};
    \end{axis}\end{tikzpicture}
    \caption{$\eps(\mathcal{X},\mathcal{I}_{\b N}(U_3))$}
  \end{subfigure}
  \begin{subfigure}[t]{0.24\linewidth}\centering
    \raggedleft
    \begin{tikzpicture}\begin{axis}[
      scale only axis, width = 2.1cm, height = 2.1cm,
      enlarge x limits = 0,
      xmode = log,
      xlabel = $\lambda$,
      ymin = -0.01, ymax = 0.35,
    ]
      \foreach \u in {1, 2, 3, 4, 5, 6, 7, 8, 9}{
        \addplot[no marks, color = orange] table[x = lambda, y = \u] {data/fistas=1.5sigma=0.0.csv};
      }
    \end{axis}\end{tikzpicture}
    \caption{$\rho(\bm u, S_{\mathcal{I}_{\b N}(U_3)}^{\mathcal X} f)$ with $|\bm u| = 1$}
  \end{subfigure}
  \begin{subfigure}[t]{0.24\linewidth}\centering
    \raggedleft
    \begin{tikzpicture}\begin{axis}[
      scale only axis, width = 2.1cm, height = 2.1cm,
      enlarge x limits = 0,
      xmode = log,
      xlabel = $\lambda$,
      ymin = -0.01, ymax = 0.09,
    ]
      \foreach \u in {12, 14, 15, 16, 17, 19, 23, 24, 27, 28, 29, 34, 35, 36, 37, 39, 45, 46, 48, 57, 58, 59, 67, 68, 69, 78, 89}{
        \addplot[no marks, dash pattern = on 2pt off 1pt] table[x = lambda, y = \u] {data/fistas=1.5sigma=0.0.csv};
      }
      \foreach \u in {13, 18, 25, 26, 38, 47, 49, 56, 79}{
        \addplot[no marks, orange] table[x = lambda, y = \u] {data/fistas=1.5sigma=0.0.csv};
      }
    \end{axis}\end{tikzpicture}
    \caption{$\rho(\bm u, S_{\mathcal{I}_{\b N}(U_3)}^{\mathcal X} f)$ with $|\bm u| = 2$}
  \end{subfigure}
  \begin{subfigure}[t]{0.24\linewidth}\centering
    \raggedleft
    \begin{tikzpicture}\begin{axis}[
      scale only axis, width = 2.1cm, height = 2.1cm,
      enlarge x limits = 0,
      xmode = log,
      xlabel = $\lambda$,
      ymin = -0.001, ymax = 0.03,
    ]
      \foreach \u in {123, 124, 125, 126, 127, 128, 129, 134, 135, 136, 137, 139, 145, 146, 147, 148, 149, 156, 157, 158, 159, 167, 168, 169, 178, 179, 189, 234, 235, 236, 237, 238, 239, 245, 246, 247, 248, 249, 257, 258, 259, 267, 268, 269, 278, 279, 289, 345, 346, 347, 348, 349, 356, 357, 358, 359, 367, 368, 369, 378, 379, 389, 456, 457, 458, 459, 467, 468, 469, 478, 489, 567, 568, 569, 578, 579, 589, 678, 679, 689, 789}{
        \addplot[no marks, dash pattern = on 2pt off 1pt] table[x = lambda, y = \u] {data/fistas=1.5sigma=0.0.csv};
      }
      \foreach \u in {138, 256, 479}{
        \addplot[no marks, orange] table[x = lambda, y = \u] {data/fistas=1.5sigma=0.0.csv};
      }
    \end{axis}\end{tikzpicture}
    \caption{$\rho(\bm u, S_{\mathcal{I}_{\b N}(U_3)}^{\mathcal X} f)$ with $|\bm u| = 3$}
  \end{subfigure}
  \caption{FISTA with large bandwidths given in \eqref{eq:largebandwidths} on exact data with $s=1.5$ (orange: active ANOVA terms $U^\star$, dashed: inactive ANOVA terms in the test function).}
  \label{fig:fistas=2.5sigma=0.0}
\end{figure}
	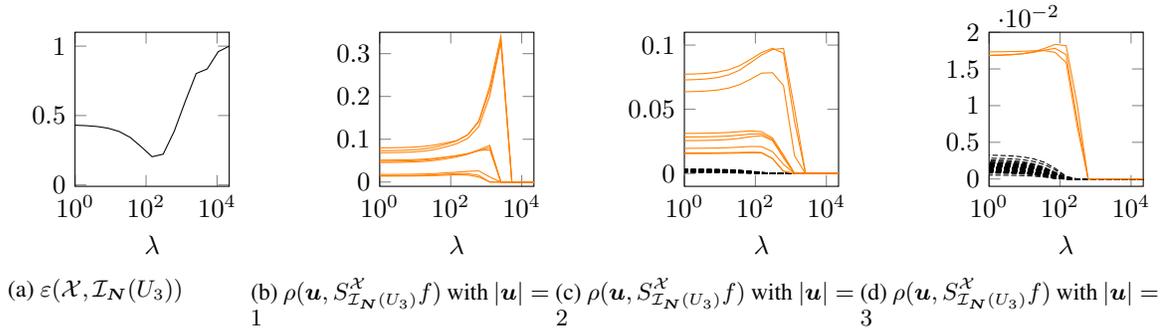
\begin{figure}
  \centering
  \begin{subfigure}[t]{0.24\linewidth}\centering
    \raggedleft
    \begin{tikzpicture}\begin{axis}[
      scale only axis, width = 2.05cm, height = 2.05cm,
      enlarge x limits = 0,
      xmode = log,
      xlabel = $\lambda$,
      ymin = -0.01, ymax = 1.1,
    ]
      \addplot[no marks] table[x = lambda, y = L2error] {data/fistas=1.5sigma=0.1.csv};
    \end{axis}\end{tikzpicture}
    \caption{$\eps(\mathcal{X},\mathcal{I}_{\b N}(U_3))$}
  \end{subfigure}
  \begin{subfigure}[t]{0.24\linewidth}\centering
    \raggedleft
    \begin{tikzpicture}\begin{axis}[
      scale only axis, width = 2.05cm, height = 2.05cm,
      enlarge x limits = 0,
      xmode = log,
      xlabel = $\lambda$,
      ymin = -0.01, ymax = 0.35,
    ]
      \foreach \u in {1, 2, 3, 4, 5, 6, 7, 8, 9}{
        \addplot[no marks, color = orange] table[x = lambda, y = \u] {data/fistas=1.5sigma=0.1.csv};
      }
    \end{axis}\end{tikzpicture}
    \caption{$\rho(\bm u, S_{\mathcal{I}_{\b N}(U_3)}^{\mathcal X} f)$ with $|\bm u| = 1$}
  \end{subfigure}
  \begin{subfigure}[t]{0.24\linewidth}\centering
    \raggedleft
    \begin{tikzpicture}\begin{axis}[
    		yticklabel style={
    			/pgf/number format/fixed,
    			/pgf/number format/precision=5
    		},
    		scaled y ticks=false,
      scale only axis, width = 2.05cm, height = 2.05cm,
      enlarge x limits = 0,
      xmode = log,
      xlabel = $\lambda$,
      ymin = -0.01, ymax = 0.11,
    ]
      \foreach \u in {12, 14, 15, 16, 17, 19, 23, 24, 27, 28, 29, 34, 35, 36, 37, 39, 45, 46, 48, 57, 58, 59, 67, 68, 69, 78, 89}{
        \addplot[no marks, dash pattern = on 2pt off 1pt] table[x = lambda, y = \u] {data/fistas=1.5sigma=0.1.csv};
      }
      \foreach \u in {13, 18, 25, 26, 38, 47, 49, 56, 79}{
        \addplot[no marks, orange] table[x = lambda, y = \u] {data/fistas=1.5sigma=0.1.csv};
      }
    \end{axis}\end{tikzpicture}
    \caption{$\rho(\bm u, S_{\mathcal{I}_{\b N}(U_3)}^{\mathcal X} f)$ with $|\bm u| = 2$}
  \end{subfigure}
  \begin{subfigure}[t]{0.24\linewidth}\centering
    \raggedleft
    \begin{tikzpicture}\begin{axis}[
      scale only axis, width = 2.05cm, height = 2.05cm,
      enlarge x limits = 0,
      xmode = log,
      xlabel = $\lambda$,
      ymin = -0.001, ymax = 0.02,
    ]
      \foreach \u in {123, 124, 125, 126, 127, 128, 129, 134, 135, 136, 137, 139, 145, 146, 147, 148, 149, 156, 157, 158, 159, 167, 168, 169, 178, 179, 189, 234, 235, 236, 237, 238, 239, 245, 246, 247, 248, 249, 257, 258, 259, 267, 268, 269, 278, 279, 289, 345, 346, 347, 348, 349, 356, 357, 358, 359, 367, 368, 369, 378, 379, 389, 456, 457, 458, 459, 467, 468, 469, 478, 489, 567, 568, 569, 578, 579, 589, 678, 679, 689, 789}{
        \addplot[no marks, dash pattern = on 2pt off 1pt] table[x = lambda, y = \u] {data/fistas=1.5sigma=0.1.csv};
      }
      \foreach \u in {138, 256, 479}{
        \addplot[no marks, orange] table[x = lambda, y = \u] {data/fistas=1.5sigma=0.1.csv};
      }
    \end{axis}\end{tikzpicture}
    \caption{$\rho(\bm u, S_{\mathcal{I}_{\b N}(U_3)}^{\mathcal X} f)$ with $|\bm u| = 3$}
  \end{subfigure}
  \caption{FISTA with large bandwidths given in \eqref{eq:largebandwidths} on noisy data with $s=1.5$ (orange: active ANOVA terms $U^\star$, dashed: inactive ANOVA terms in the test function).}
  \label{fig:fistas=2.5sigma=0.1}
\end{figure}
	
	\item
	Now, we are interested in incorporating smoothness information about the function $f$: We choose the large bandwidths \eqref{eq:largebandwidths} which results in $53\,806$ frequencies such that we are underdetermined. With that many degrees of freedom we have to reduce our search space by using the Sobolev weights $\omega_s$ from \eqref{sobolevweights} for $s = 1.5$ in $\bm{\hat W}(\bm u)$. This corresponds to the smoothness of the function since $f \in \sobolev{\omega_{1.5-\eps}}(\T^d)$. 
	
	The results without noise are depicted in \cref{fig:fistas=2.5sigma=0.0}. The behaviour looks very similar to experiment (i) with smaller bandwidths, but the minimal $\L_2$-error is smaller: $\eps(\mathcal{X},\mathcal{I}_{\b N}(U_3)) \approx 1.7\cdot 10^{-2}$.
	Furthermore, under- and overfitting does not occur as the $\L_2$-error curve is strictly increasing. Moreover, the active set of terms $f_{\u}$, $\u \in U^\ast$, is correctly recognized by group lasso. 
	
	\item
	The same experiment with additional $10\%$ Gaussian noise can be seen in \cref{fig:fistas=2.5sigma=0.1}.
	In comparison to experiment (ii) with noise and small bandwidths we achieve a smaller minimal $\L_2$-error $\eps(\mathcal{X},\mathcal{I}_{\b N}(U_3)) \approx 0.203$.
	In addition, the ANOVA terms $f_{\bm u}$, $\u \in U^\ast$ are now better distinguishable as the smoothness weights $\omega_{1.5}$ filter out the non-smooth noise.
\end{enumerate}

\section{Application Data Example}\label{sec:adult}
In this section, we aim to apply our method to the well-known \textit{adult census data set} from the UCI repository \cite{UCI}. The data was extracted from the 1994 US census database. The goal is to use 14 attributes about a person or group of persons to predict whether they have an income of more than $50\,000$ dollars a year. This represents one of the most popular data sets from the UCI repository. 

Since this data is not periodic, we use the generalization hinted at in \cref{rem:extension}. This requires a complete orthonormal system of product structure in a space of non-periodic $\L_2$ functions. We consider the space of square-integrable $\L_2([0,1]^d)$ over the cube $[0,1]^d$ with the half-period cosine basis, i.e., we have functions \begin{equation*}
	\phi_{\k}(\x) = \sqrt{2}^{\abs{\supp \k}} \prod_{j=1}^d \cos(\pi k_j x_j)
\end{equation*} for frequencies $\k \in \N_0^d$ where $\N_0 = \{0,1,\dots\}$. It is a well-known fact that $(\phi_{\k})_{\k \in \N_0^d}$ is a complete orthonormal system in $\L_2([0,1]^d)$, see e.g.\ \cite{KuoMiNoNu19}. Without going into explicit detail, it is evident that the considerations from \cref{sec:anova} and \cref{sec:gft} generalize to this system. However, the low dimensional transforms \eqref{eq:somafm} will be realized trough a non-equispaced fast cosine transform (NFCT), see \cite[Section 7.4]{PlPoStTa18}, instead of the NFFT. 

The data set requires a few simple pre-processing steps in order to be used with our approach. First of all, we remove the attribute \textit{education} since this is already encoded in \textit{education.num} and the attribute \textit{fnlwgt} since it gives a weight to entries which is not normalized across federal states. We also transform categorical attributes to numerical attributes and remove data points with missing values. Moreover, we perform a min-max-normalization such that we have data points in the cube $[0,1]^{12}$. After that pre-processing we have a $45\, 199$ data points which we split into a training node set $\mathcal{X}_{\mathrm{train}} = \{ \x_1, \x_2, \dots, \x_{M_{\mathrm{train}}} \} \subset [0,1]^{12}$ with $M_{\mathrm{train}} = 36\,160$ nodes and a test node set $\mathcal{X}_{\mathrm{test}} = \{ \x_1, \x_2, \dots, \x_{M_{\mathrm{test}}} \} \subset [0,1]^{12}$ with $M_{\mathrm{test}} = 9\,039$ nodes. Therefore, we have $80\%$ of the overall data to obtain the approximation and $20\%$ to verify the performance. Whether a person earns more than $50\,000$ dollars or not is a binary question, i.e., we have a vector $\y_{\mathrm{train}} \in \{0,1\}^{M_{\mathrm{train}}}$ such that for a person $\x_i \in \mathcal{X}_{\mathrm{train}}$ in the training set we know that they earn $50\,000$ dollar or more if $y_i = 1$ and they do not earn as much if $y_i = 0$.

It is our goal to answer the same question for every person from the test set $\mathcal{X}_{\mathrm{test}}$. The general idea is to use an approximate partial sum \begin{equation*}
	S_{\mathcal{I}_{\b N}(U)}^{\mathcal{X}_{\mathrm{train}}} f (\x) \coloneqq \sum_{\k \in \mathcal{I}_{\b N}(U)} \hat{f}_{\k} \,\phi_{\k}(\x) 
\end{equation*} of an unknown function $f \in \L_2([0,1]^d)$ with coefficients $\hat{f}_{\k}$ determined from the methods proposed in \cref{sec:approx}, $U$ a subset of ANOVA terms and a grouped index set $\mathcal{I}_{\b N}(U) \subset \N_0^d$. Now, we use as a model function \begin{equation*}
	\tilde{f}_{\mathcal{I}_{\b N}(U)}^{\mathcal{X}_{\mathrm{train}}}(\x) \coloneqq \begin{cases}
		0 \quad &\colon S_{\mathcal{I}_{\b N}(U)}^{\mathcal{X}_{\mathrm{train}}} f (\x) < 0.5 \\
		1 \quad &\colon S_{\mathcal{I}_{\b N}(U)}^{\mathcal{X}_{\mathrm{train}}} f (\x) \geq 0.5
	\end{cases}
\end{equation*} by deciding with a threshold of $0.5$. The data set provides us with a way of validating our result by providing the correct answer $g(\x) \in \{0,1\}$ for every $\x \in \mathcal{X}_{\mathrm{test}}$ which has not been used in obtaining the model function. As a quality measure (or error), we use the percentage of correctly classified people \begin{equation*}
	p(\mathcal{I}_{\b N}(U),\mathcal{X}_{\mathrm{train}},\mathcal{X}_{\mathrm{test}}) \coloneqq 100 \left( 1 - \frac{1}{M_{\mathrm{test}}}\sum_{\x \in\mathcal{X}_{\mathrm{test}}} \abs{\tilde{f}_{\mathcal{I}_{\b N}(U)}^{\mathcal{X}_{\mathrm{train}}}(\x) - g(\x)} \right) \in [0,100].
\end{equation*}

Our first step is to assume a model with superposition threshold $d_s = 2$, i.e., we use the set of ANOVA terms $U_2 = \{ \u \subset \{1,2,\dots,12\} \colon \au \leq 2 \}$, cf.\ \eqref{eq:pizmosch}. In order to get the grouped index set $\mathcal{I}_{\b N}(U_2)$ we choose parameters \begin{equation}\label{appl:bandwidths}
	N_{\u} = \begin{cases}
		82 \quad&\colon \au = 1 \\ 10 \quad&\colon \au = 2
	\end{cases}
\end{equation} for the related groups of frequencies \begin{equation*}
	\mathcal I_{N_{\u}}^{\bm u, d}
	\coloneqq \left\{
	\bm k\in \N_0^d  \colon \supp \k = \u \text{ and } 1 \leq k_j \leq N_{\u}-1 \text{ for }  j \in \u
	\right\}, \u \in U_2,
\end{equation*} such that $\abs{\mathcal{I}_{\b N}(U_2)} = 6\,319$. Note that all of the following results have been obtained by using 10-fold cross-validation as in \cite{kohavi-nbtree}, i.e., the split of our data into training nodes $\mathcal{X}_{\mathrm{train}}$ and test nodes $\mathcal{X}_{\mathrm{test}}$ has been performed ten times and the results were averaged. This averaging was also performed for the global sensitivity indices. We notice an optimal percentage of correctly classified people $p(\mathcal{I}_{\b N}(U_2),\mathcal{X}_{\mathrm{train}},\mathcal{X}_{\mathrm{test}}) = 86.15\%$ for the $\ell_2$ regularization from \cref{sec:lsqr} and $p(\mathcal{I}_{\b N}(U_2),\mathcal{X}_{\mathrm{train}},\mathcal{X}_{\mathrm{test}}) = 86.08\%$ for the group lasso approach from \cref{sec:fista}

In the case of  $\ell_2$ regularization, we try to improve the error by determining an active set of ANOVA terms. However, the situation is not as clear as for the synthetic example in \cref{sec:approx}. Trough testing, we determined that it delivers the best improvement if we choose $\b\eps = (0.1, 0.1) \in \R^2$ such that terms $f_{\u}$ with global sensitivity indices $\gsi{\u}{S_{\mathcal{I}_{\b N}(U_2)}^{\mathcal{X}_{\mathrm{train}}} f} < 0.1$ are removed for the approximation $S_{\mathcal{I}_{\b N}(U_{\mathcal{X}_{\mathrm{train}},\y}^{(\b\eps)})}^{\mathcal{X}_{\mathrm{train}}}$. We may also increase the bandwidth for the one-dimensional terms such that we have \begin{equation}\label{appl:bandwidths2}
	N_{\u} = \begin{cases}
		300 \quad&\colon \au = 1 \\ 10 \quad&\colon \au = 2.
	\end{cases}
\end{equation} This reduces the number of terms from $\abs{U_2} = 79$ to $|U_{\mathcal{X}_{\mathrm{train}},\y}^{(\b\eps)}| = 35$, cf.\ \eqref{activeset}, and it also increases the number of correctly classified people to a best result of $p(\mathcal{I}_{\b N}(U_{\mathcal{X}_{\mathrm{train}},\y}^{(\b\eps)}),\mathcal{X}_{\mathrm{train}},\mathcal{X}_{\mathrm{test}}) = 86.30\%$. 

In \cref{fig:app} we have visualized our results. From \cref{fig:app:error} it is evident that the group lasso approach does not identify important terms correctly for this data set which yields a lower number of correctly classified people with increasing regularization parameter. 
An explaination is the lack of sparsity in the underlying function. The FISTA algorithm still eliminates many ANOVA terms while sacrificing data fitting quality.
Moreover, for the $\ell_2$ regularization we see that the performance can be improved trough sensitivity analysis and reducing the number of terms. The (averaged) global sensitivity indices of the 12 one-dimensional ANOVA terms are depicted in \cref{fig:app:lsqr:1} for the $\ell_2$ regularization and in \cref{fig:app:fista:1} for the group lasso. Moreover, we have the sensitivity indices for the 66 two-dimensional ANOVA terms are depicted in \cref{fig:app:lsqr:2} and \cref{fig:app:fista:2}. We can see that the group lasso does not identify the same terms as the sensitivity analysis. However, trough the validation on our test set, we observe that the sensitivity analysis yields an active set of terms $U_{\mathcal{X}_{\mathrm{train}},\y}^{(\b\eps)}$ which results in a larger number of correctly classified people.

In \cref{fig:app:lsqr:1} we observe that one term explains around 25\% of the variance of our approximation. This attribute is the \textit{capital loss} and it therefore is the most important attribute according to our model obtained by $\ell_2$ regularization. The same term appears with about 35\% of the variance in \cref{fig:app:fista:1} obtained by group lasso which means that even tough their performance differs, both models identify the same variable as vital to the problem. However, the group lasso approach starts to penalize this term with increasing regularization parameter which may be one reason the performance of this model is dropping.

Note that in order to compare our experiments to other known results, we need to be in a similar setting. We compare to the results from \cite{kohavi-nbtree} and therefore use 10-fold cross-validation to verify our result. Moreover, it is in general possible to perform exploratory data analysis, e.g., remove outliers, which was not done in \cite{kohavi-nbtree} and we also did not perform here. \cref{tab:results:appl} shows the comparison. The ratio of training and test data is with 80\% to 20\% the same. Our approach yields a better classification result than all other approaches from \cite{kohavi-nbtree} which include a support vector machine (SVM), a C4.5 decision-tree induction, a Naive-Bayes approach, and NBtree which is a hybrid between decision-tree and Naive-Bayesian classifiers.

\begin{figure}[tbhp]
  \centering 
  \begin{subfigure}[t]{0.49\linewidth}\centering
    \begin{tikzpicture}\begin{axis}[
      scale only axis, width = 3.5cm, height = 3.5cm,
      enlarge x limits = 0,
      xmode = log,
      xlabel = $\lambda$,
      ytick = {85, 85.5, 86, 86.3},
      ymin = 85, ymax = 86.5,
    ]
      \addplot[no marks] table[x = lambda, y = L2error] {data/lsqr=full.csv};
      \addplot[no marks, color = orange] table[x = lambda, y = L2error] {data/lsqr=active.csv};
	  \addplot[no marks, densely dotted] table[x = lambda, y = L2error] {data/fista.csv};
      \addplot[sharp plot,update limits=false, dashed] 
      coordinates {(1,86.30268835048123) (2980.9579870417283,86.30268835048123)};
    \end{axis}\end{tikzpicture}
    \caption{$p(\mathcal{I}_{\b N}(U_2),\mathcal{X}_{\mathrm{train}},\mathcal{X}_{\mathrm{test}})$ for $\ell_2$ regularization (black) and for group lasso (dotted). $p(\mathcal{I}_{\b N}(U_{\mathcal{X}_{\mathrm{train}},\y}^{(\b\eps)}),\mathcal{X}_{\mathrm{train}},\mathcal{X}_{\mathrm{test}})$ for $\ell_2$ regularization after sensitivity analysis (orange).}\label{fig:app:error}
  \end{subfigure} \\
   \begin{subfigure}[t]{0.44\linewidth}\centering
 	\begin{tikzpicture}\begin{axis}[
 			scale only axis, width = 3.5cm, height = 3.5cm,
 			enlarge x limits = 0,
 			xmode = log,
 			xlabel = $\lambda$,
 			ymin = -0.005, ymax = 0.25,
 			]
 			\foreach \u in {10,11,12,13}{
 				\addplot[no marks, color = orange] table[x = lambda, y = \u]{data/lsqr=full.csv};
 			}
 			\foreach \u in {2,3,4,5,6,7,8,9}{
 				\addplot[no marks, color = black] table[x = lambda, y = \u]{data/lsqr=full.csv};
 			}
 	\end{axis}\end{tikzpicture}
 	\caption{$\gsi{\u}{S_{\mathcal{I}_{\b N}(U_2)}^{\mathcal{X}_{\mathrm{train}}} f}$, $\au = 1$, for $\ell_2$ regularization and LSQR solver.}\label{fig:app:lsqr:1}
 \end{subfigure}
\hfill%
 \begin{subfigure}[t]{0.44\linewidth}\centering
 	\begin{tikzpicture}\begin{axis}[
 			scale only axis, width = 3.5cm, height = 3.5cm,
			enlarge x limits = 0,
			xmode = log,
			xlabel = $\lambda$,
			]
			\foreach \u in {10,11,12,13}{
				\addplot[no marks, color = orange] table[x = lambda, y = \u]{data/fista.csv};
			}
			\foreach \u in {2,3,4,5,6,7,8,9}{
				\addplot[no marks, color = black] table[x = lambda, y = \u]{data/fista.csv};
			}
 	\end{axis}\end{tikzpicture}
 	\caption{$\gsi{\u}{S_{\mathcal{I}_{\b N}(U_2)}^{\mathcal{X}_{\mathrm{train}}} f}$, $\au = 1$, for group lasso and FISTA solver.}\label{fig:app:fista:1}
 \end{subfigure} \\
\begin{subfigure}[t]{0.44\linewidth}\centering
	\begin{tikzpicture}\begin{axis}[
			scale only axis, width = 3.5cm, height = 3.5cm,
			enlarge x limits = 0,
			xmode = log,
			xlabel = $\lambda$,
			ymin = -0.0025, ymax = 0.07,
			]
			\foreach \u in {14, 15, 16, 17, 22, 23, 24, 27, 33, 34, 36, 40, 41, 42, 43, 44, 49, 50, 51, 55, 56, 57, 58, 62, 63, 68, 74, 77, 79}{
				\addplot[no marks, color = orange] table[x = lambda, y = \u]{data/lsqr=full.csv};
			}
			\foreach \u in {18, 19, 20, 21, 25, 26, 28, 29, 30, 31, 32, 35, 37, 38, 39, 45, 46, 47, 48, 52, 53, 54, 59, 60, 61, 64, 65, 66, 67, 69, 70, 71, 72, 73, 75, 76, 78}{
				\addplot[no marks, color = black] table[x = lambda, y = \u]{data/lsqr=full.csv};
			}
	\end{axis}\end{tikzpicture}
	\caption{$\gsi{\u}{S_{\mathcal{I}_{\b N}(U_2)}^{\mathcal{X}_{\mathrm{train}}} f}$, $\au = 2$, for $\ell_2$ regularization and LSQR solver.}\label{fig:app:lsqr:2}
\end{subfigure}
\hfill%
\begin{subfigure}[t]{0.44\linewidth}\centering
	\begin{tikzpicture}\begin{axis}[
			scale only axis, width = 3.5cm, height = 3.5cm,
			enlarge x limits = 0,
			xmode = log,
			yticklabel style={
				/pgf/number format/fixed,
				/pgf/number format/precision=5
			},
			scaled y ticks=false,
			xlabel = $\lambda$,
			]
			\foreach \u in {14, 15, 16, 17, 22, 23, 24, 27, 33, 34, 36, 40, 41, 42, 43, 44, 49, 50, 51, 55, 56, 57, 58, 62, 63, 68, 74, 77, 79}{
				\addplot[no marks, color = orange] table[x = lambda, y = \u]{data/fista.csv};
			}
			\foreach \u in {18, 19, 20, 21, 25, 26, 28, 29, 30, 31, 32, 35, 37, 38, 39, 45, 46, 47, 48, 52, 53, 54, 59, 60, 61, 64, 65, 66, 67, 69, 70, 71, 72, 73, 75, 76, 78}{
				\addplot[no marks, color = black] table[x = lambda, y = \u]{data/fista.csv};
			}
	\end{axis}\end{tikzpicture}
	\caption{$\gsi{\u}{S_{\mathcal{I}_{\b N}(U_2)}^{\mathcal{X}_{\mathrm{train}}} f}$, $\au = 2$, for group lasso and FISTA solver.}\label{fig:app:fista:2}
\end{subfigure}
  \caption{Numerical experiments with the adult census data set using bandwidths \eqref{appl:bandwidths} in $\mathcal{I}_{\b N}(U_2)$ and bandwidths \eqref{appl:bandwidths2} in $\mathcal{I}_{\b N}(U_{\mathcal{X}_{\mathrm{train}},\y}^{(\b\eps)})$.}
  \label{fig:app}
\end{figure}
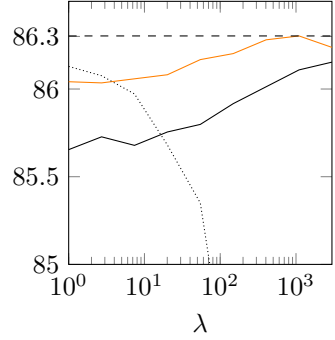
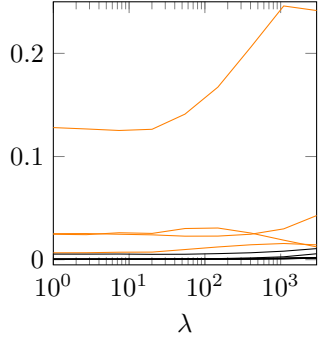
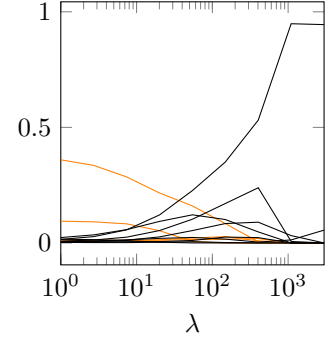
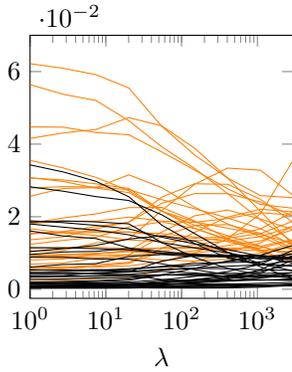
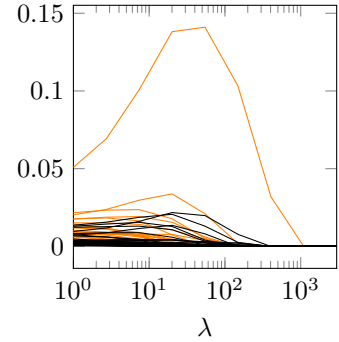

\begin{table}[tbhp]
	\begin{center}
		\begin{tabular}{cc} 
			\toprule
			method & correctly classified persons \\\midrule 
			SVM & 85.03\% \\
			C4.5 & 84.46\% \\
			Naive Bayes & 83.88\% \\
			NBtree & 85.90\% \\
			ANOVAapprox ($\ell_2$) & \textbf{86.30\%} \\
			ANOVAapprox (group lasso) & 86.08\% \\\bottomrule
		\end{tabular}
		\caption{Comparison of the performance of different classification approaches from \cite{kohavi-nbtree} on the adult census data set to methods proposed in this paper.}\label{tab:results:appl}
	\end{center}
\end{table} 

\section{Conclusion}
In the context of Fourier analysis, we introduced grouped frequency index sets $\mathcal{I}_{\b N}(U)$ for a subset of ANOVA terms $U \subset \mathcal{P}(\D)$, see \eqref{groupedset}, consisting of groups $\mathcal I_{N}^{\bm u, d}$, see \eqref{metallica}, along lower-dimensional cubes in the frequency domain. These can then be used to move the computations from the original $d$ dimensions to the dimension of the groups by exploiting the block structure \eqref{bananarama} in the matrix. The resulting transformations are only of dimension up to a superposition threshold $d_s < d$. We presented a fast algorithm --- the grouped Fourier transform --- based on the identity \eqref{eq:somafm}. We were able to carry out every matrix-vector multiplication via the nonequispaced fast Fourier transformation (NFFT) to obtain the typical FFT-like complexity. Moreover, the transformation can be computed simultaneously and independent of the other transformations allowing for parallelization of the algorithm.

Even though we assumed a very special structure, there are proven applications: We have the one-to-one correspondence to the ANOVA decomposition, which we discussed in \cref{sec:anova}. As proven in \cite{PoSc19a}, limiting the variable interactions to a superposition threshold $d_s < d$ works well for functions with a low superposition dimension. Moreover, certain function classes naturally lead to a low superposition dimension for high accuracy. When assuming sparsity-of-effects or the Pareto principle, one may also find that this idea works well for real world problems since those systems are dominated by a few low complexity interactions.

In \cref{sec:approx} we presented two approaches to modify the least-squares problem \eqref{min_prob} occurring in the approximation of the truncated ANOVA decomposition $\mathrm{T}_{d_s} f$, cf.\ \eqref{universe} and the ideas in \cite{PoSc19a}.
\begin{enumerate}[(i)]
	\item
	The first approach follows the concept of minimizing the Tikhonov-functional \eqref{new_min_prob}. This extensions allows for a modification which is able to handle noise and incorporate smoothness information which reduces the \textit{search space} of the minimization to functions of the smoothness class. This enables us to use significantly larger groups $\mathcal I_{N}^{\bm u, d}$ reducing the cut-off error from considering the Fourier partial sum \eqref{eq:rollingstones}, cf.\ \cite[Section 6]{PoSc19a}. 
	\item
	The second approach uses the group lasso idea, where we proposed to use the ANOVA terms for the groups, see \eqref{eq:alligatoah}.
	In contrast to the classical Tikhonov-regularization this promotes sparsity in the ANOVA decomposition directly trough the regularization which eliminates the need for sensitivity analysis and therefore only one minimization needs to be solved. As there is no explicit formula for the solution of this non-smooth minimization problem, we used the FISTA algorithm to tackle it.
\end{enumerate}
Both approaches performed well in the numerical experiments from \cref{sec:approx} which cover the approximation of a synthetic test function in under-, overdertermined, and also noisy settings. It was successful to implement the sensitivity analysis into the regularization via a group lasso approach. Moreover, reducing the search space by considering functions of a certain smoothness also worked well. However, the situation is different for real data, i.e., the adult census data set, in \cref{sec:adult}. Here, we observe that group lasso has trouble identifying important terms and does not perform as well as a manual sensitivity analysis and $\ell_2$ regularization. In summary, we were able to outperform known results from a SVM and other classical machine learning methods on the data set.

The code is available in the Julia packages \texttt{ANOVAapprox} and \texttt{GroupedTransforms} on GitHub, see \path{https://github.com/NFFT/ANOVAapprox} and \path{https://github.com/NFFT/GroupedTransforms}. As shown in \cite{PoSc19a}, it is also possible to work with a huge amount of data, namely multiple million nodes, using these methods. This would of course be a severely overdetermined setting. 

Since we end up with a Fourier representation of the ANOVA decomposition, we can visualize it in the form of an explainable ANOVA network, see \cref{fig:anovanetwork}.
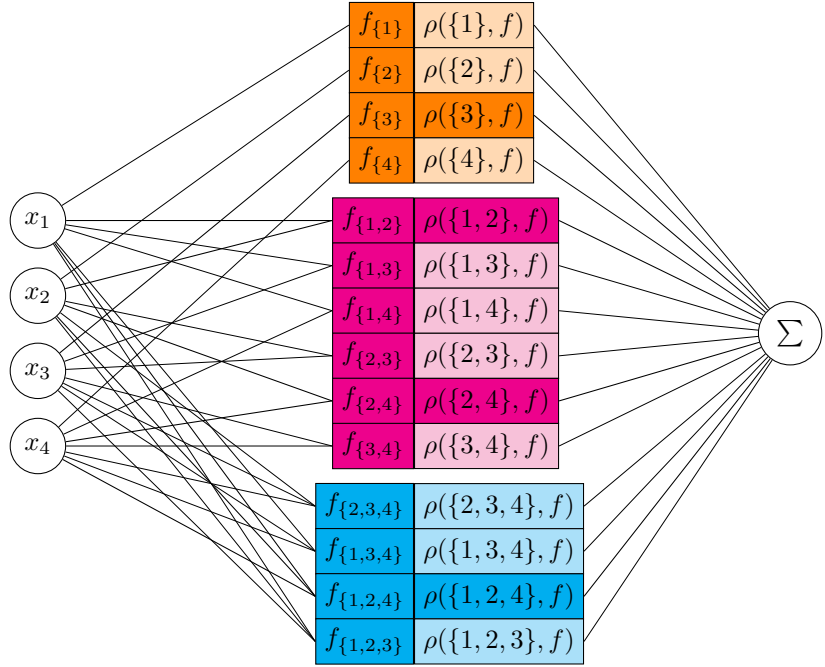
\begin{figure}[tbhp]
  \centering
  \begin{tikzpicture}
    \node (x1) at (-5, 1.5) [circle, draw] {$x_1$};
    \node (x2) at (-5, 0.5) [circle, draw] {$x_2$};
    \node (x3) at (-5, -0.5) [circle, draw] {$x_3$};
    \node (x4) at (-5, -1.5) [circle, draw] {$x_4$};

    \node (f1) at (-0, 4.1) [rectangle, draw, fill = orange, anchor = east, minimum height = 0.6cm] {$f_{\{1\}}$};
    \node (f2) at (-0, 3.5) [rectangle, draw, fill = orange, anchor = east, minimum height = 0.6cm] {$f_{\{2\}}$};
    \node (f3) at (-0, 2.9) [rectangle, draw, fill = orange, anchor = east, minimum height = 0.6cm] {$f_{\{3\}}$};
    \node (f4) at (-0, 2.3) [rectangle, draw, fill = orange, anchor = east, minimum height = 0.6cm] {$f_{\{4\}}$};

    \node (f12) at (-0, 1.5) [rectangle, draw, fill = magenta, anchor = east, minimum height = 0.6cm] {$f_{\{{1,2}\}}$};
    \node (f13) at (-0, 0.9) [rectangle, draw, fill = magenta, anchor = east, minimum height = 0.6cm] {$f_{\{{1,3}\}}$};
    \node (f14) at (-0, 0.3) [rectangle, draw, fill = magenta, anchor = east, minimum height = 0.6cm] {$f_{\{{1,4}\}}$};
    \node (f23) at (-0, -0.3) [rectangle, draw, fill = magenta, anchor = east, minimum height = 0.6cm] {$f_{\{{2,3}\}}$};
    \node (f24) at (-0, -0.9) [rectangle, draw, fill = magenta, anchor = east, minimum height = 0.6cm] {$f_{\{{2,4}\}}$};
    \node (f34) at (-0, -1.5) [rectangle, draw, fill = magenta, anchor = east, minimum height = 0.6cm] {$f_{\{{3,4}\}}$};

    \node (f234) at (-0, -2.3) [rectangle, draw, fill = cyan, anchor = east, minimum height = 0.6cm] {$f_{\{{2,3,4}\}}$};
    \node (f134) at (-0, -2.9) [rectangle, draw, fill = cyan, anchor = east, minimum height = 0.6cm] {$f_{\{{1,3,4}\}}$};
    \node (f124) at (-0, -3.5) [rectangle, draw, fill = cyan, anchor = east, minimum height = 0.6cm] {$f_{\{{1,2,4}\}}$};
    \node (f123) at (-0, -4.1) [rectangle, draw, fill = cyan, anchor = east, minimum height = 0.6cm] {$f_{\{{1,2,3}\}}$};

    \node (g1) at (0, 4.1) [rectangle, draw, fill = orange!30, anchor = west, minimum height = 0.6cm] {$\rho(\{1\}, f)$};
    \node (g2) at (0, 3.5) [rectangle, draw, fill = orange!30, anchor = west, minimum height = 0.6cm] {$\rho(\{2\}, f)$};
    \node (g3) at (0, 2.9) [rectangle, draw, fill = orange, anchor = west, minimum height = 0.6cm] {$\rho(\{3\}, f)$};
    \node (g4) at (0, 2.3) [rectangle, draw, fill = orange!30, anchor = west, minimum height = 0.6cm] {$\rho(\{4\}, f)$};

    \node (g12) at (0, 1.5) [rectangle, draw, fill = magenta, anchor = west, minimum height = 0.6cm] {$\rho(\{{1,2}\}, f)$};
    \node (g13) at (0, 0.9) [rectangle, draw, fill = magenta!30, anchor = west, minimum height = 0.6cm] {$\rho(\{{1,3}\}, f)$};
    \node (g14) at (0, 0.3) [rectangle, draw, fill = magenta!30, anchor = west, minimum height = 0.6cm] {$\rho(\{{1,4}\}, f)$};
    \node (g23) at (0, -0.3) [rectangle, draw, fill = magenta!30, anchor = west, minimum height = 0.6cm] {$\rho(\{{2,3}\}, f)$};
    \node (g24) at (0, -0.9) [rectangle, draw, fill = magenta, anchor = west, minimum height = 0.6cm] {$\rho(\{{2,4}\}, f)$};
    \node (g34) at (0, -1.5) [rectangle, draw, fill = magenta!30, anchor = west, minimum height = 0.6cm] {$\rho(\{{3,4}\}, f)$};

    \node (g234) at (0, -2.3) [rectangle, draw, fill = cyan!30, anchor = west, minimum height = 0.6cm] {$\rho(\{{2,3,4}\}, f)$};
    \node (g134) at (0, -2.9) [rectangle, draw, fill = cyan!30, anchor = west, minimum height = 0.6cm] {$\rho(\{{1,3,4}\}, f)$};
    \node (g124) at (0, -3.5) [rectangle, draw, fill = cyan, anchor = west, minimum height = 0.6cm] {$\rho(\{{1,2,4}\}, f)$};
    \node (g123) at (0, -4.1) [rectangle, draw, fill = cyan!30, anchor = west, minimum height = 0.6cm] {$\rho(\{{1,2,3}\}, f)$};

    \node (sigma) at ( 5, 0) [circle, draw] {$\sum$};

    \draw [ultra thin] (x1) to (f1.west);
    \draw [ultra thin] (x2) to (f2.west);
    \draw (x3) to (f3.west);
    \draw [ultra thin] (x4) to (f4.west);

    \draw (x1) to (f12.west);
    \draw [ultra thin] (x1) to (f13.west);
    \draw [ultra thin] (x1) to (f14.west);
    \draw (x2) to (f12.west);
    \draw [ultra thin] (x2) to (f23.west);
    \draw (x2) to (f24.west);
    \draw [ultra thin] (x3) to (f13.west);
    \draw [ultra thin] (x3) to (f23.west);
    \draw [ultra thin] (x3) to (f34.west);
    \draw [ultra thin] (x4) to (f14.west);
    \draw (x4) to (f24.west);
    \draw [ultra thin] (x4) to (f34.west);

    \draw [ultra thin] (x1) to (f123.west);
    \draw [ultra thin] (x1) to (f134.west);
    \draw (x1) to (f124.west);
    \draw [ultra thin] (x2) to (f123.west);
    \draw [ultra thin] (x2) to (f234.west);
    \draw (x2) to (f124.west);
    \draw [ultra thin] (x3) to (f123.west);
    \draw [ultra thin] (x3) to (f134.west);
    \draw [ultra thin] (x3) to (f234.west);
    \draw (x4) to (f124.west);
    \draw [ultra thin] (x4) to (f134.west);
    \draw [ultra thin] (x4) to (f234.west);

    \draw [ultra thin] (g1.east) to (sigma);
    \draw [ultra thin] (g2.east) to (sigma);
    \draw (g3.east) to (sigma);
    \draw [ultra thin] (g4.east) to (sigma);
    \draw (g12.east) to (sigma);
    \draw [ultra thin] (g13.east) to (sigma);
    \draw [ultra thin] (g14.east) to (sigma);
    \draw [ultra thin] (g23.east) to (sigma);
    \draw (g24.east) to (sigma);
    \draw [ultra thin] (g34.east) to (sigma);
    \draw [ultra thin] (g234.east) to (sigma);
    \draw [ultra thin] (g134.east) to (sigma);
    \draw (g124.east) to (sigma);
    \draw [ultra thin] (g123.east) to (sigma);
  \end{tikzpicture}
  \caption{Explainable ANOVA network of a function $f$ for $d_s=3$. We visualize the different ANOVA terms $f_{\bm u}$ for $|\bm u| =1$ in orange, $|\bm u| =2$ in magenta and for $|\bm u| =3$ in blue. The related global sensitivity indices $\rho(\bm u,f)$ serve as interpretable quantitiy for the ANOVA term.}
  \label{fig:anovanetwork}
\end{figure} This representation immediately shows the couplings of the variables or the ANOVA terms and, with help of the global sensitivity indices \eqref{eq:gsi}, depicts the importance of individual ANOVA terms or, in other words, which terms are active and which are not. 

\section*{Acknowledgments}
Felix Bartel acknowledges funding by the European Social Fund (ESF), Project ID 100367298. Daniel Potts acknowledges funding by the German Research Foundation (Deutsche Forschungsgemeinschaft) - Project ID 416228727 - SFB 1410. Michael Schmischke is supported by the BMBF grant 01$|$S20053A.

\bibliographystyle{siamplain}

\end{document}